\documentclass{amsart}
\usepackage{amssymb,amsmath,amsthm,newlfont,enumerate}

\theoremstyle{plain}
\newtheorem{theorem}{Theorem}[section]
\newtheorem{lemma}[theorem]{Lemma}
\newtheorem{corollary}[theorem]{Corollary}
\newtheorem{proposition}[theorem]{Proposition}
\newtheorem{problem}[theorem]{Problem}

\theoremstyle{definition}

\theoremstyle{remark}
\newtheorem*{remark}{Remark}

\newcommand{\CC}{{\mathbb C}}
\newcommand{\DD}{{\mathbb D}}
\newcommand{\RR}{{\mathbb R}}
\newcommand{\TT}{{\mathbb T}}
\newcommand{\ZZ}{{\mathbb Z}}
\newcommand{\cD}{{\mathcal D}}
\newcommand{\cH}{{\mathcal H}}

\renewcommand{\hat}{\widehat}

\begin{document}

\title[Coefficient formula for de Branges--Rovnyak norms]{On the coefficient formula for\\ de Branges--Rovnyak norms}

\author[T. Ransford]{Thomas Ransford}

\address{%
D\'epartement de math\'ematiques et de statistique, Universit\'e Laval, Qu\'ebec (QC), G1V 0A6, Canada}

\email{ransford@mat.ulaval.ca}

\thanks{Research supported by  NSERC Discovery Grant RGPIN-2026-04565}

\subjclass{Primary 47B32, Secondary 47B35}

\keywords{De Branges--Rovnyak space, Hardy space, Smirnov class, Toeplitz operator}

\date{4 September 2026}

\begin{abstract}
Let $\cH(b)$ be the de Branges--Rovnyak space associated
to a non-extreme point $b$ of the unit ball of $H^\infty$,
and let $\phi=b/a$, where $a$ is the Pythagorean mate of $b$.
It is known that, if $f$ is a function holomorphic on a neighbourhood
of the closed unit disk, then it belongs to $\cH(b)$,
and its norm in $\cH(b)$
can be expressed in terms of the Taylor coefficients of $f$ and $\phi$ via the formula
\[
\|f\|_{\cH(b)}^2=\sum_{m\ge0}|\hat{f}(m)|^2
+\sum_{m\ge0}\Bigl|\sum_{n\ge0}\overline{\hat{\phi}(n)}\hat{f}(m+n)\Bigr|^2.
\]
However,  the formula can break down for some other 
$f\in\cH(b)$.

In this article we extend the scope of the formula to all $f\in H^2$ 
for which the right-hand side is finite, provided that either 
$\phi\in H^2$ or $\phi$ is rational.
If merely $\phi\in H^p$ for some $p\in(0,2]$,
then the formula still holds provided that, in addition,
$\sum_{m\ge0}m^{2/p-1}|\hat{f}(m)|^2<\infty$.
We also establish a limit-form of the formula that is 
valid for all non-extreme $b$ and all $f\in\cH(b)$.
\end{abstract}

\maketitle

\section{Introduction}

The spaces now called de Branges--Rovnyak spaces 
were introduced by de Branges and Rovnyak 
in the appendix of \cite{dBR66a} 
and further studied in \cite{dBR66b}.
They were later popularized by the book of Sarason \cite{Sa94}.
It is now known that these spaces have a variety of connections 
with other topics in complex analysis and operator theory,
and they have attracted the interest of many authors.

When working with de Branges--Rovnyak spaces, 
it is very helpful 
to have a formula for the norm (and, via polarization,
for the inner product) of functions in these spaces,
expressed  in terms of their Taylor coefficients.
There does indeed exist such a formula, namely,
\begin{equation}\label{E:formula}
\|f\|_{\cH(b)}^2\text{~``=''~}\sum_{m\ge0}|\hat{f}(m)|^2
+\sum_{m\ge0}\Bigl|\sum_{n\ge0}\overline{\hat{\phi}(n)}\hat{f}(m+n)\Bigr|^2.
\end{equation}
Here $\cH(b)$ is the de Branges--Rovnyak space
associated to a non-extreme point $b$ of the unit ball of $H^\infty$,
and $\phi=b/a$, where $a$ is the Pythagorean mate of $b$
(all these terms will be explained in \S\ref{S:deBR} below).
Also $\hat{f}(n),\hat{\phi}(n)$  are the $n$-th
coefficients  of the Taylor expansions around $0$ of $f,\phi$ respectively.

What is noteworthy about the formula \eqref{E:formula}
is that it is only sometimes 
true---hence the inverted commas around the equals sign.
It \emph{is} always true if $f$ is a polynomial.
Indeed, the special case when $f(z)=z^k$, namely
\[
\|z^k\|_{\cH(b)}^2=1+\sum_{n=0}^k|\hat{\phi}(n)|^2,
\]
is well known and used in many places in the literature.
More generally, equality holds in \eqref{E:formula}
if $f$ is holomorphic in a neighbourhood of the closed unit disk;
this is implicit in \cite{Sa08}, and proved explicitly in 
\cite{CGR10}. As polynomials are dense
in $\cH(b)$, one might be tempted to infer that the same formula
holds for all $f\in\cH(b)$, but in fact this turns out to be false
in general.
Counterexamples can be found, for example, in \cite{EFKMR16}
and \cite{MR18}, and there are more in this article.

In view of the potential applications, 
it is of great interest to widen the scope of validity
of the formula to include as many functions $f$ as possible.
This is the goal of the present paper.
Here is a summary of our main results:
\begin{itemize}
\item If $\phi$ belongs to the Hardy space $H^2$, 
then equality holds in \eqref{E:formula}
for all $f\in\cH(b)$ (Theorem~\ref{T:H2}).
On the other hand, this is no longer necessarily the
case if merely $\phi\in \cap_{p<2}H^p$
(Corollary~\ref{C:p<2}).
\item If $\phi$ belongs to $H^p$ for some $p\in(0,2]$, then 
equality holds in \eqref{E:formula} for all $f\in H^2$
for which the right-hand side of \eqref{E:formula} is finite
and such that, in addition, $\sum_{m\ge0}m^{2/p-1}|\hat{f}(m)|^2<\infty$
(Theorem~\ref{T:Hp}).
\item If $\phi$ is rational 
then equality holds in \eqref{E:formula} for all $f\in H^2$
such that the right-hand side of \eqref{E:formula} is finite.
If, in addition,
all the poles of $\phi$ on the unit circle $\TT$ are simple,
then equality holds in \eqref{E:formula} for all $f\in \cH(b)$.
On the other hand, if $\phi$ has a multiple pole on $\TT$, 
then \eqref{E:formula} breaks down for at least one $f\in\cH(b)$
(Theorem~\ref{T:rational}).
\item There is a limit form of the formula \eqref{E:formula}
that is  valid for all Pythag\-orean pairs $(b,a)$  and all $f\in\cH(b)$ (Theorem~\ref{T:Tapprox}).
\end{itemize}

%%%%%%%%%%%%%%%%%%%%%%%%%%%%%%%%

\section{Background on de Branges--Rovnyak spaces}\label{S:deBR}

In this section we review  de Branges--Rovnyak spaces and some related notions
that will be needed in the sequel. The treatment is deliberately brief. 
The interested reader may find much more information in the books
of Sarason \cite{Sa94} and Fricain--Mashreghi \cite{FM16}.

\subsection{De Branges--Rovnyak spaces}
Let $b\in H^\infty$ with $\|b\|_{H^\infty}\le 1$. 
Then the map $(z,w)\mapsto (1-\overline{b(w)}b(z))/(1-\overline{w}z)$ is
positive-definite on $\DD\times\DD$, so it is the reproducing kernel 
of a unique Hilbert function space on $\DD$. This space is
called the \emph{de Branges--Rovnyak space} associated to $b$,
and is denoted by $\cH(b)$. It is always contractively contained
in the Hardy space $H^2$.

The theory of de Branges--Rovnyak spaces is governed by a fundamental
dichotomy, namely whether or not $b$ is  an extreme point of the unit ball of~$H^\infty$.
For example, if $b$ is non-extreme, then $\cH(b)$ contains the polynomials and they are dense
in $\cH(b)$. On the other hand, if $b$ is extreme, then the polynomials in $\cH(b)$ only span a
finite-dimensional subspace. In this article, we shall be concerned exclusively with the case where $b$
is non-extreme.

\subsection{Pythagorean pairs}
We say that $(b,a)$ is a \emph{Pythagorean pair} if
\begin{itemize}
\item $b,a\in H^\infty$;
\item $|b|^2+|a|^2=1$ a.e.\ on $\TT$;
\item $a$ is outer and $a(0)>0$.
\end{itemize}
The function $a$ is completely determined by $b$, and is 
called its \emph{Pythagorean mate}.
A function $b$ 
occurs as the first element of a Pythagorean pair 
iff it is a non-extreme point of the unit ball of $H^\infty$.
Thus the map $(b,a)\mapsto b$ is a bijection between the set of
Pythagorean pairs and the set of non-extreme points of the unit
ball of $H^\infty$.

Given a Pythagorean pair $(b,a)$, we write
$T_{\overline{b}}$ and $T_{\overline{a}}$ for the bounded Toeplitz operators on $H^2$
with symbols $\overline{b}$ and $\overline{a}$ respectively.
The following characterization of $\cH(b)$ will be fundamental in what follows.

\begin{theorem}[\protect{\cite[\S IV-1]{Sa94}}]\label{T:TbTa}
Let $(b,a)$ be a Pythagorean pair, and let $f\in H^2$.
Then $f\in\cH(b)$ iff there exists $g\in H^2$ such that $T_{\overline{b}}f=T_{\overline{a}}g$.
In this case, $g$ is uniquely determined, and we have
\begin{equation}\label{E:TbTa}
\|f\|_{\cH(b)}^2=\|f\|_{H^2}^2+\|g\|_{H^2}^2.
\end{equation}
\end{theorem}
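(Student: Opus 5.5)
I would prove this starting from the standard description of $\cH(b)$ as a range space: $\cH(b)=(I-T_bT_{\overline b})^{1/2}H^2$ with the range norm. Polarizing the reproducing kernel shows that $\cH(b)$ is the range of $(I-T_bT_{\overline b})^{1/2}$. Sarason's standard result then identifies it with the range space of $T_{\overline b}$-complement: $\cH(b)=\cH(\overline b)$-type structure. Concretely, $f\in\cH(b)$ iff $T_{\overline b}f\in\cM(\overline b)$, where $\cM(\overline b)=T_{\overline b}... $. Rather than lean on that, I would organize the proof around the identity
\[
I-T_bT_{\overline b}=T_aT_{\overline a}+(T_{\overline b}^*\text{-defect}),
\]
which comes from $T_{|b|^2}+T_{|a|^2}=I$ and $T_{|b|^2}=T_{\overline b}T_b$, $T_{|a|^2}=T_{\overline a}T_a$ (valid since $b,a$ are analytic).

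The key steps, in order, are as follows.

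(1) Recall the known fact (e.g.\ via the de Branges--Rovnyak complementation) that $f\in\cH(b)$ iff $T_{\overline b}f\in\cH(\overline b):=\operatorname{ran}(I-T_{\overline b}T_b)^{1/2}$. In that case
\[
\|f\|_{\cH(b)}^2=\|f\|_{H^2}^2+\|T_{\overline b}f\|_{\cH(\overline b)}^2.
\]
This is Sarason's \S IV-1 lemma, proved by comparing the kernels of the range spaces $\operatorname{ran}(I-T_bT_{\overline b})^{1/2}$ and using $T_{\overline b}(I-T_bT_{\overline b})=(I-T_{\overline b}T_b)T_{\overline b}$.

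(2) Use $I-T_{\overline b}T_b=T_{\overline a}T_a$ to conclude that $(I-T_{\overline b}T_b)^{1/2}$ and $T_{\overline a}$ have the same range with the same range norm. This follows from Douglas' lemma, since $AA^*=BB^*$ implies $\operatorname{ran}A=\operatorname{ran}B$ isometrically, applied with $A=T_{\overline a}$.

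(3) Since $a$ is outer, $T_a$ has dense range, so $T_{\overline a}=T_a^*$ is injective. Hence the $g$ with $T_{\overline a}g=T_{\overline b}f$ is unique, and its range norm is exactly $\|g\|_{H^2}$. Combining (1)–(3) gives the equivalence and the formula \eqref{E:TbTa}.

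The main obstacle is step (1): identifying the $\cH(b)$-norm with $\|f\|^2+\|T_{\overline b}f\|^2_{\cH(\overline b)}$. My first attempt there is a direct argument. Write $f=(I-T_bT_{\overline b})^{1/2}h$ with $h\perp\ker$. Then check that $\|f\|^2+\|T_{\overline b}f\|^2_{\cH(\overline b)}=\|h\|^2$, using the intertwining relation above to move $T_{\overline b}$ across the square roots. For the converse direction, I would run the same computation backwards.
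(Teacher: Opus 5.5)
Your proposal is correct, and it is essentially Sarason's argument in \S IV-1, which the paper cites rather than reproves. That argument has three ingredients: the norm identity relating $\cH(b)$ and $\cH(\overline b)$; the identity $I-T_{\overline b}T_b=T_{|a|^2}=T_{\overline a}T_a$, which gives $\cH(\overline b)=\mathcal{M}(T_{\overline a})$ isometrically; and the injectivity of $T_{\overline a}$ for outer $a$. Two small repairs are needed. First, the identity in your opening paragraph should read $I-T_{\overline b}T_b=T_{\overline a}T_a$, as you use in step (2); the order matters, since $T_bT_{\overline b}\ne T_{|b|^2}$ in general. Second, the converse half of your direct proof of step (1) is not a literal reversal of the forward computation. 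It follows from $f=(I-T_bT_{\overline b})f+T_bT_{\overline b}f$ together with $T_b(I-T_{\overline b}T_b)^{1/2}=(I-T_bT_{\overline b})^{1/2}T_b$, which exhibits $f$ in the range of $(I-T_bT_{\overline b})^{1/2}$.
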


\subsection{The Smirnov class}
The Smirnov class $N^+$ consists of those holomorphic functions
$\phi$ on $\DD$ that can be represented as
$\phi=\psi/\chi$, where $\psi,\chi\in H^\infty$ and $\chi$ is outer.
It is an algebra containing all the Hardy spaces $H^p~(0<p\le\infty)$.
For more about $N^+$ and $H^p$ we refer to
Duren's book \cite{Du70}.

Evidently, if $(b,a)$ is a Pythagorean pair, then $b/a\in N^+$.
Conversely, every $\phi\in N^+$ has a unique representation
$\phi=b/a$, where $(b,a)$ is a Pythag\-orean pair
(see e.g.\ \cite[Proposition~3.1]{Sa08}).
Thus the map $(b,a)\mapsto b/a$ is a bijection
between the set of Pythagorean pairs and the Smirnov class.

The correspondence between $(b,a)$ and $\phi$ comes into play
when considering formulas for $\|f\|_{\cH(b)}$ expressed in terms of the
Taylor coefficients of~$f$.
The following  result
is implicit in the work of Sarason \cite{Sa08}
and was formally established  in \cite[Theorem~4.1]{CGR10}.

\begin{theorem}\label{T:CGR}
Let $(b,a)$ be a Pythagorean pair and let $\phi:=b/a$.
If  $f$ is holomorphic on an open neighbourhood of $\overline{\DD}$, then $f\in\cH(b)$ and 
\[
\|f\|_{\cH(b)}^2=\sum_{m\ge0}|\hat{f}(m)|^2
+\sum_{m\ge0}\Bigl|\sum_{n\ge0}\overline{\hat{\phi}(n)}\hat{f}(m+n)\Bigr|^2.
\]
\end{theorem}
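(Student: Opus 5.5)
We deduce Theorem~\ref{T:CGR} from Theorem~\ref{T:TbTa} by exhibiting the function $g$ explicitly. Suppose $f$ is holomorphic on the disk $|z|<R$ with $R>1$. Our candidate is
\[
g:=T_{\overline{\phi}}f, \qquad \hat g(m):=\sum_{n\ge0}\overline{\hat\phi(n)}\hat f(m+n)\quad(m\ge0).
\]
We then verify that $g\in H^2$ and that $T_{\overline b}f=T_{\overline a}g$. Once this is done, \eqref{E:TbTa} gives the stated formula immediately, because $\|g\|_{H^2}^2=\sum_m|\hat g(m)|^2$.

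First we check that the series defining $\hat g(m)$ converge absolutely and that $g\in H^2$. Since $\phi\in N^+$, its Taylor coefficients grow subexponentially: $\limsup|\hat\phi(n)|^{1/n}\le 1$. Indeed, $N^+$ functions satisfy $\log|\phi(re^{i\theta})|=O(1/(1-r))$, and Cauchy estimates give $|\hat\phi(n)|=e^{O(\sqrt n)}$. On the other hand $|\hat f(k)|\le CR'^{-k}$ for any $1<R'<R$. Fix $1<\rho<R'$. Then
\[
|\hat g(m)|\le C\,R'^{-m}\sum_n|\hat\phi(n)|R'^{-n}\le C'R'^{-m},
\]
so $g$ is itself holomorphic on a disk of radius greater than $1$. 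In particular $g\in H^2$.

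Next we verify the Toeplitz identity, and we do it coefficientwise. Consider the pairing with $z^m$:
\[
\langle T_{\overline a}g,z^m\rangle=\sum_j\overline{\hat a(j)}\hat g(m+j)=\sum_j\sum_n\overline{\hat a(j)}\,\overline{\hat\phi(n)}\,\hat f(m+j+n).
\]
The double sum converges absolutely, since $a$ is bounded (so its coefficients are bounded) and the $\hat f$ terms decay geometrically while the $\hat\phi(n)$ terms grow subexponentially. We may therefore regroup by $k=j+n$. Because $a\phi=b$ as power series, $\sum_{j+n=k}\hat a(j)\hat\phi(n)=\hat b(k)$. The pairing becomes
\[
\sum_k\overline{\hat b(k)}\hat f(m+k)=\langle T_{\overline b}f,z^m\rangle.
\]
Hence $T_{\overline b}f=T_{\overline a}g$. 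By Theorem~\ref{T:TbTa}, $f\in\cH(b)$ and $\|f\|_{\cH(b)}^2=\|f\|_{H^2}^2+\|g\|_{H^2}^2$, which is exactly the formula claimed.

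The main obstacle is the coefficient growth bound $\limsup|\hat\phi(n)|^{1/n}\le1$ for $\phi\in N^+$. We justify it by writing $\phi=b/a$ with $a$ outer. Then
\[
\log|\phi(z)|\le -\log|a(z)|=-P[\log|a|](z)\le \frac{1+r}{1-r}\,\|\log|a|\|_{L^1}
\]
for $|z|=r$, using $|b|\le1$ and the fact that $a$ is outer. Cauchy's estimate on the circle $|z|=r$ then gives
\[
|\hat\phi(n)|\le r^{-n}\exp\bigl(2\|\log|a|\|_{L^1}/(1-r)\bigr),
\]
so $\limsup|\hat\phi(n)|^{1/n}\le 1/r$ for every $r<1$, which is sufficient. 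This justifies every interchange of summation used above.
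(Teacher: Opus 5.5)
Your proof is correct and complete. The paper does not prove this statement itself; it quotes it from \cite[Theorem~4.1]{CGR10}, noting that it is implicit in \cite{Sa08}. Your argument nonetheless follows the same template the paper uses for its own extensions: define $g$ by the coefficient formula, verify $T_{\overline{b}}f=T_{\overline{a}}g$, and invoke Theorem~\ref{T:TbTa}.

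Your check of the Toeplitz identity is coefficientwise. You regroup an absolutely convergent double series using $a\phi=b$, which is the same device as in the proof of Theorem~\ref{T:rational}\,(i) and in the ``double series commutes'' step of Theorem~\ref{T:Hp}. Theorem~\ref{T:H2} instead works with functions on $\TT$: it writes $\overline{\phi}f-g=\overline{h}$ and uses $H^1_0\cap L^2(\TT)=H^2_0$. The coefficient route is what lets your argument cover every $\phi\in N^+$, including those lying in no $H^p$. For such $\phi$, neither the boundary-function argument nor its distributional version is available.

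One simplification: what you call the main obstacle is automatic. The bound $\limsup_n|\hat{\phi}(n)|^{1/n}\le1$ holds for every function holomorphic on $\DD$, by Cauchy--Hadamard. So the Smirnov structure of $\phi$ enters your proof only through the identity $a\phi=b$. Also, the auxiliary radius $\rho$ you introduce is never used.
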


As explained in the introduction, the goal of this article is to extend 
Theorem~\ref{T:CGR} to a wider range of functions $f$.

%%%%%%%%%%%%%%%%%%%%%%%%%%%%%%%

\section{The case when $\phi\in H^2$}\label{S:H2}

The de Branges--Rovnyak spaces $\cH(b)$ for which $\phi\in H^2$
form a well-known subclass. Indeed, from \cite[Theorem~1]{Sa86}, we have
\[
\phi\in H^2
\iff \frac{1}{1-|b|^2}\in L^1(\TT)
\iff\sup_{n\ge0}\|z^n\|_{\cH(b)}<\infty
\iff H^\infty\subset \cH(b).
\]
A recent result in \cite[Theorem~1.3]{MS24} shows that we can replace $H^\infty$
in this last statement by any one of the spaces BMOA, VMOA (holomorphic functions of bounded/vanishing mean oscillation),  or
$A(\DD)$ (the disk algebra).
The condition $\phi\in H^2$  also characterizes the de Branges--Rovnyak spaces that admit
a reverse Carleson measure \cite[Corollary~4.4]{BFGHR15}.

Our goal in this section is to establish the following theorem.
In this result, and those that follow, we adopt
the convention that, if $f$ is holomorphic on $\DD$, then $\hat{f}(j)=0$ for all $j<0$.

\begin{theorem}\label{T:H2}
Let $(b,a)$ be a Pythagorean pair such that $\phi:=b/a\in H^2$,
and let $f\in H^2$.
\begin{enumerate}[\normalfont(i)]
\item For each $m\in\ZZ$ we have 
\[
\sum_{n\ge0}|\overline{\hat{\phi}(n)}\hat{f}(m+n)|<\infty.
\]
\item We have $f\in\cH(b)$ if and only if
\begin{equation}\label{E:H2cond}
\sum_{m\ge0}\Bigl|\sum_{n\ge0} \overline{\hat{\phi}(n)}\hat{f}(m+n)\Bigr|^2<\infty,
\end{equation}
in which case
\begin{equation}\label{E:H2formula}
\|f\|_{\cH(b)}^2=\sum_{m\ge0}|\hat{f}(m)|^2
+\sum_{m\ge0}\Bigl|\sum_{n\ge0}\overline{\hat{\phi}(n)}\hat{f}(m+n)\Bigr|^2.
\end{equation}
\end{enumerate}
\end{theorem}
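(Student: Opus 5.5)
Part (i) is Cauchy--Schwarz: since $\phi,f\in H^2$,
\[
\sum_{n\ge0}|\hat{\phi}(n)||\hat{f}(m+n)|\le\|\phi\|_{H^2}\|f\|_{H^2}<\infty .
\]
The inner sum $\sum_{n}\overline{\hat\phi(n)}\hat f(m+n)$ is the $m$-th Fourier coefficient of $\overline{\phi}f\in L^1(\TT)$. Hence, for $m\ge0$, it is the $m$-th Taylor coefficient of the Smirnov-type projection $h:=P_+(\overline{\phi}f)$, interpreted coefficientwise. The plan is to connect $h$ with the function $g$ in Theorem~\ref{T:TbTa}, and to show that $g=h$ whenever either side makes sense.

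For the direction ``$f\in\cH(b)\Rightarrow$ \eqref{E:H2cond} and \eqref{E:H2formula}'', take $g\in H^2$ with $T_{\overline b}f=T_{\overline a}g$, i.e.
\[
P_+(\overline b f-\overline a g)=0 .
\]
Then $\overline b f-\overline a g=\overline{z\,k}$ for some $k\in H^2$ (a function in $L^1$ at worst, but here in $L^2$ since $a,b\in H^\infty$). Divide by $\overline a$, using $\phi=b/a$, to get
\[
\overline\phi f-g=\overline{zk}/\overline a=\overline{zk/a}.
\]
The left side is in $L^1$ because $\phi\in H^2$. Thus $zk/a$ is a function in $N^+$ whose boundary values lie in $L^1$, so by Smirnov's theorem $zk/a\in H^1$, and it vanishes at $0$. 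Therefore the Fourier coefficients of $\overline\phi f-g$ vanish for all $m\ge0$. This gives $\hat g(m)=\sum_n\overline{\hat\phi(n)}\hat f(m+n)$ for $m\ge0$, and then \eqref{E:H2cond} and \eqref{E:H2formula} follow from \eqref{E:TbTa}.

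For the converse, assume \eqref{E:H2cond}. Define $g\in H^2$ by
\[
\hat g(m):=\sum_n\overline{\hat\phi(n)}\hat f(m+n).
\]
Then $u:=\overline\phi f-g\in L^1$ has vanishing nonnegative Fourier coefficients, so $u=\overline{zv}$ with $v\in H^1_0$. Multiplying by $\overline a$ gives
\[
\overline b f-\overline a g=\overline{zav},
\]
with $av\in H^1$. Since the left side is in $L^2$, so is $av$, which lies in $H^1\cap L^2=H^2$. Hence $P_+(\overline bf-\overline ag)=0$, i.e.\ $T_{\overline b}f=T_{\overline a}g$, and Theorem~\ref{T:TbTa} gives $f\in\cH(b)$ together with the norm formula.

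The main obstacle is the Smirnov step in the first direction. One must justify that $zk/a$ belongs to $N^+$, which holds because $a$ is outer and so $1/a\in N^+$. Combined with Smirnov's theorem ($N^+\cap L^1(\TT)=H^1$), this gives $zk/a\in H^1$. The hypothesis $\phi\in H^2$ is used exactly to guarantee $\overline\phi f\in L^1$, and this is where the argument would fail for $\phi\in H^p$ with $p<2$.
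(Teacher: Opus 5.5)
Your proposal is correct and follows the paper's proof essentially step for step: Cauchy--Schwarz for (i); then, for $f\in\cH(b)\Rightarrow$ \eqref{E:H2cond}, dividing $\overline b f-\overline a g=\overline{zk}$ by $\overline a$ and using Smirnov's theorem to get $zk/a\in H^1_0$; and, for the converse, multiplying by $\overline a$ and using $H^1\cap L^2=H^2$. One small slip that does not affect the argument: vanishing nonnegative coefficients only give $u=\overline{zv}$ with $v\in H^1$, not $v\in H^1_0$, and $v\in H^1$ is all you use.
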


\begin{proof}
(i)
For each $m\in\ZZ$, Cauchy--Schwarz gives
\begin{align*}
\sum_{n\ge0}|\overline{\hat{\phi}(n)}\hat{f}(m+n)|
&\le \Bigl(\sum_{n\ge0}|\hat{\phi}(n)|^2\Bigr)^{1/2}\Bigl(\sum_{n\ge0}|\hat{f}(n)|^2\Bigr)^{1/2}\\
&=\|\phi\|_{H^2}\|f\|_{H^2}<\infty.
\end{align*}
This proves (i). For future use, we note also that $\overline{\phi}f\in L^1(\TT)$ and that
\[
\hat{\overline{\phi}f}(m)=\sum_{n\ge0}\overline{\hat{\phi}(n)}\hat{f}(m+n)\quad(m\in\ZZ).
\]

(ii) 
Suppose first that  \eqref{E:H2cond} holds.
Set 
\[
g(z):=\sum_{m\ge0}\Bigl(\sum_{n\ge0}\overline{\hat{\phi}(n)}\hat{f}(m+n) \Bigr)z^m \quad(z\in\DD).
\]
Then  $g\in H^2$, 
and so $(\overline{\phi}f-g)\in L^1(\TT)$.
Also $\hat{g}(m)=\hat{\overline{\phi}f}(m)$
for all $m\ge0$, so the non-negative Fourier coefficients
of $(\overline{\phi}f-g)$ all vanish.
Therefore there exists $h\in H^1_0$ such that
\[
\overline{\phi}f-g=\overline{h} \quad\text{a.e. on~}\TT.
\]
Multiplying up by $\overline{a}$, we get
\[
\overline{b}f-\overline{a}g=\overline{ah} \quad\text{a.e. on~}\TT.
\]
Then $ah\in H^1_0\cap L^2(\TT)=H^2_0$, so $P_+(\overline{ah})=0$, where $P_+:L^2\to H^2$ is the canonical projection.
We deduce that $T_{\overline{b}}(f)=T_{\overline{a}}(g)$.
By Theorem~\ref{T:TbTa}, it follows that $f\in \cH(b)$ and that
\[
\|f\|_{\cH(b)}^2=\|f\|_{H^2}^2+\|g\|_{H^2}^2
=\sum_{m\ge0}|\hat{f}(m)|^2+\sum_{m\ge0}\Bigl|\sum_{n\ge0}\overline{\hat{\phi}(n)}\hat{f}(m+n)\Bigr|^2,
\]
which establishes \eqref{E:H2formula}.

Conversely, suppose that $f\in\cH(b)$. 
By Theorem~\ref{T:TbTa}, there exists $g\in H^2$
such that $T_{\overline{b}}(f)=T_{\overline{a}}(g)$. This means that there exists $h\in H^2_0$ such that
\[
\overline{b}f-\overline{a}g=\overline{h}\quad\text{a.e. on~}\TT.
\]
Dividing through by $\overline{a}$, we get
\[
\overline{\phi}f-g=\overline{h/a}\quad\text{a.e. on~}\TT.
\]
Now $h\in H^2$ and $a$ is a bounded outer function, 
so $h/a\in N^+$. Also, since $\phi,f,g\in L^2(\TT)$,
we have 
$(\overline{\phi}f-g)\in L^1(\TT)$, and thus $h/a\in L^1(\TT)$. 
By the Smirnov maximum principle
(see e.g.\ \cite[Theorem~2.11]{Du70}),
it follows that $h/a\in H^1$.  Further,
since $h(0)=0$, we in fact have $h/a\in H^1_0$.
Therefore the non-negative Fourier coefficients of
$(\overline{\phi}f-g)$ all vanish, and so
\[
\hat{g}(m)=\hat{\overline{\phi}f}(m)=\sum_{n\ge0}\overline{\hat{\phi}(n)}\hat{f}(m+n) \quad(m\ge0).
\]
As $g\in H^2$, we deduce that  \eqref{E:H2cond} holds.
\end{proof}

%%%%%%%%%%%%%%%%%%%%%%%%%%%%%%%%

\section{The case when $\phi\in H^p$}\label{S:Hp}

Let $(b,a)$ be a Pythagorean pair and let $\phi:=b/a$.
If $0<p\le \infty$, then 
\[
\phi\in H^p\iff (1-|b|^2)^{-1/2}\in L^p(\TT).
\]
Our goal in this section is to broaden the scope
of Theorem~\ref{T:H2} to cover the case when $\phi\in H^p$
where $0<p\le 2$. 

\begin{theorem}\label{T:Hp}
Let $p\in(0,2]$, and
let $(b,a)$ be a Pythagorean pair such that $\phi:=b/a\in H^p$.
Let $f$ be a holomorphic function on $\DD$ whose Taylor coefficients satisfy
\begin{equation}\label{E:Hphyp}
\sum_{m\ge0}m^{2/p-1}|\hat{f}(m)|^2<\infty.
\end{equation}
\begin{enumerate}[\normalfont(i)]
\item For all $m\in\ZZ$, we have
\[
\sum_{n\ge0}\Bigl|\overline{\hat{\phi}(n)}\hat{f}(m+n)\Bigr|<\infty.
\]
\item If 
\begin{equation}\label{E:Hpcond}
\sum_{m\ge0}\Bigl|\sum_{n\ge0}\overline{\hat{\phi}(n)}\hat{f}(m+n)\Bigr|^2<\infty,
\end{equation}
then $f\in\cH(b)$ and
\begin{equation}\label{E:Hpformula}
\|f\|_{\cH(b)}^2=\sum_{m\ge0}|\hat{f}(m)|^2
+\sum_{m\ge0}\Bigl|\sum_{n\ge0}\overline{\hat{\phi}(n)}\hat{f}(m+n)\Bigr|^2.
\end{equation}
\end{enumerate}
\end{theorem}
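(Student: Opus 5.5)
The plan rests on one embedding. For $0<p\le 2$, the Hardy space $H^p$ embeds continuously into the weighted Dirichlet-type space
\[
\cD_p:=\Bigl\{\psi:\ \|\psi\|_{\cD_p}^2:=\sum_{n\ge0}(n+1)^{1-2/p}|\hat{\psi}(n)|^2<\infty\Bigr\},
\]
with $\|\psi\|_{\cD_p}\le C_p\|\psi\|_{H^p}$. I would get this from the Hardy--Littlewood--Flett inequality
\[
\int_0^1(1-r)^{\lambda(1/p-1/q)-1}M_q(r,\psi)^\lambda\,dr\le C\|\psi\|_{H^p}^\lambda\qquad(p<q,\ \lambda\ge p),
\]
taking $q=\lambda=2$. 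The exponent is then $2/p-2$, and $\int_0^1(1-r)^{2/p-2}M_2(r,\psi)^2\,dr$ is comparable to $\sum_n(n+1)^{1-2/p}|\hat\psi(n)|^2$, because $\int_0^1(1-r)^{2/p-2}r^{2n}\,dr\asymp (n+1)^{1-2/p}$. (For $p=2$ the embedding is trivial.)

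Hypothesis \eqref{E:Hphyp} says exactly that $f$ lies in the dual weighted space, with weight $(n+1)^{2/p-1}$; since $2/p-1\ge0$, in particular $f\in H^2$. For part (i), I would apply Cauchy--Schwarz with these dual weights. Using $(m+n+1)^{2/p-1}\ge (n+1)^{2/p-1}$ for $m\ge0$, this gives
\[
\sum_{n\ge0}|\hat\phi(n)||\hat f(m+n)|\le \|\phi\|_{\cD_p}\Bigl(\sum_{j\ge0}(j+1)^{2/p-1}|\hat f(j)|^2\Bigr)^{1/2}=:K<\infty .
\]
The bound $K$ is uniform in $m\ge0$, and the case $m<0$ only involves the same sum with fewer terms. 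More generally, the same computation shows that the pairing $\langle f,\psi\rangle:=\sum_j\hat f(j)\overline{\hat\psi(j)}$ is a bounded functional of $\psi\in\cD_p$, hence of $\psi\in H^p$.

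For part (ii), let $c_m:=\sum_{n}\overline{\hat\phi(n)}\hat f(m+n)$ and define $g:=\sum_{m\ge0}c_mz^m$, which lies in $H^2$ by \eqref{E:Hpcond}. By Theorem~\ref{T:TbTa}, it suffices to show $T_{\overline b}f=T_{\overline a}g$. Since polynomials are dense in $H^2$, this amounts to proving $\langle f,bq\rangle=\langle g,aq\rangle$ for every polynomial $q$. Put $h:=aq\in H^\infty$, so that $bq=\phi h$.

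I expect the interchange of summations to be the main obstacle, because the coefficients of $h$ need not be summable. I would handle it by Abel regularization with $h_r(z):=h(rz)$, $0<r<1$. Since $\sum_m r^m|\hat h(m)|<\infty$ and the inner sums are bounded by $K$, Fubini applies and gives
\[
\sum_{m\ge0}c_m\,r^m\overline{\hat h(m)}=\sum_{j\ge0}\hat f(j)\,\overline{\widehat{\phi h_r}(j)}=\langle f,\phi h_r\rangle .
\]
Now let $r\to1$ on each side.
\begin{itemize}
\item On the left, $(c_m)\in\ell^2$ and $(\hat h(m))\in\ell^2$, so dominated convergence gives the limit $\langle g,h\rangle=\langle g,aq\rangle$.
\item On the right, $\phi h_r\to\phi h$ in $H^p$. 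This holds because $h_r\to h$ a.e.\ on $\TT$ with $|h_r|\le\|h\|_\infty$, so dominated convergence applies to $|\phi|^p|h_r-h|^p$.
\end{itemize}
By the embedding $H^p\hookrightarrow\cD_p$ and the boundedness of $\langle f,\cdot\rangle$ on $\cD_p$, we get $\langle f,\phi h_r\rangle\to\langle f,\phi h\rangle=\langle f,bq\rangle$.

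This gives $T_{\overline b}f=T_{\overline a}g$. Theorem~\ref{T:TbTa} then yields $f\in\cH(b)$ and $\|f\|_{\cH(b)}^2=\|f\|_{H^2}^2+\|g\|_{H^2}^2$, which is exactly \eqref{E:Hpformula}. The only genuinely external input is the embedding $H^p\hookrightarrow\cD_p$, so I would make sure to cite it precisely (e.g.\ Duren's book, the chapter on mean growth and Taylor coefficients).
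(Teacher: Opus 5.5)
Your proof is correct and is essentially the paper's argument written in coefficient form. Your $\cD_p$ is the paper's $W^{-s,2}(\TT)$ with $s=1/p-1/2$; you get this embedding for all $p<2$ at once from Hardy--Littlewood--Flett, where the paper uses two citations split at $p=1$. Your Fubini identity $\sum_m c_m r^m\overline{\hat h(m)}=\langle f,\phi h_r\rangle$ is the coefficient version of the paper's identity $\overline{a}_r(\overline{\phi}f)=(\overline{a_r\phi})f$, and your limit $\phi h_r\to\phi h$ in $H^p$ is its $a_r\phi\to b$; testing against $aq$ simply replaces the paper's distributional product $\overline{\phi}f$.

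One small correction in (i): for $m<0$ the sum is not a subsum of an $m\ge0$ sum, and the bound is not uniform in $m$. For example, $f\equiv1$ gives $|\hat\phi(|m|)|$, which is unbounded for $\phi=(1-z)^{-\gamma}$ with $1<\gamma<1/p$ when $p<1$. The fix is to use $n+1\le(1+|m|)(n-|m|+1)$ for $n\ge|m|$ in the same weighted Cauchy--Schwarz, which gives the finite bound $(1+|m|)^{1/p-1/2}K$.
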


\begin{remark}
Theorem~\ref{T:Hp} is less satisfactory than
Theorem~\ref{T:H2} since it requires the extra hypothesis
\eqref{E:Hphyp} and, even with this hypothesis, it does not assert that the condition \eqref{E:Hpcond} holds for all $f\in\cH(b)$.
On the other hand, as we shall see later,
we cannot simply replace the assumption
$\phi\in H^2$ in Theorem~\ref{T:H2} by $\phi\in H^p$ or even by
$\phi\in\cap_{p<2}H^p$ (see Corollary~\ref{C:p<2} below).
\end{remark}

The proof of Theorem~\ref{T:Hp} makes use of
distributions on the unit circle~$\TT$,
so, before embarking upon the proof, we review
some basic facts about these distributions that will be needed
in what follows.

A \emph{distribution} on $\TT$ is a linear functional $u:C^\infty(\TT)\to\CC$
such that, for some constants $C,k$,
\begin{equation}\label{E:distcond}
|\langle u,\psi\rangle|\le C\sum_{j=0}^k\|\psi^{(j)}\|_{L^\infty(\TT)}
\quad(\psi\in C^\infty(\TT)).
\end{equation}
A sequence $(u_j)$ of distributions \emph{converges in distribution} to $u$ if
$\langle u_j,\psi\rangle\to\langle u,\psi\rangle$
for each $\psi\in C^\infty(\TT)$.
The \emph{Fourier coefficients} of a distribution $u$ on $\TT$ are defined by
\[
\hat{u}(n):=\langle u,e^{-in\theta}\rangle \quad(n\in\ZZ).
\]
If $u$ satisfies \eqref{E:distcond}, then $\hat{u}(n)=O(|n|^k)$
as $|n|\to\infty$. A distribution $u$ can be recovered from its
Fourier coefficients via the formula
\[
\langle u,\psi\rangle=\sum_{n\in\ZZ}\hat{u}(n)\hat{\psi}(-n)
\quad(\psi\in C^\infty(\TT)).
\]

Let $\phi\in H^p$, where $0<p\le\infty$. Then $\phi$ can be identified
with a distribution via the formula
\[
\langle \phi,\psi\rangle :=\lim_{r\to1^-}\frac{1}{2\pi}\int_\TT \phi(re^{i\theta})\psi(e^{i\theta})\,d\theta 
\quad(\psi\in C^\infty(\TT)).
\]
The non-negative Fourier coefficients of $\phi$ then coincide with its Taylor coefficients (so there is no ambiguity in the notation $\hat{\phi}(n)$). The negative Fourier coefficients are all zero.
 
For each $s\in\RR$, the \emph{Sobolev space} $W^{s,2}(\TT)$
consists of those distributions $u$ on $\TT$ such that
\[
\|u\|_{W^{s,2}(\TT)}^2:=\sum_{n\in\ZZ}(1+n^2)^{s}|\hat{u}(n)|^2<\infty.
\]
Hardy spaces and Sobolev spaces are linked by
the following lemma.

\begin{lemma}\label{L:HardySobolev}
Let $p\in(0,2]$ and let $s=1/p-1/2$. 
Then $H^p\subset W^{-s,2}(\TT)$,
and there exists a constant $C_p$ such that
\begin{equation}\label{E:HardySobolev}
\|\phi\|_{W^{-s,2}(\TT)}\le C_p\|\phi\|_{H^p} \quad(\phi\in H^p).
\end{equation}
\end{lemma}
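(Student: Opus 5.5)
The plan is to reduce the inequality to a coefficient estimate for $H^p$ functions, namely the classical Hardy--Littlewood inequality for $0<p\le 2$:
\[
\sum_{n\ge0}(n+1)^{p-2}|\hat{\phi}(n)|^p\le K_p\|\phi\|_{H^p}^p .
\]
We will take this as known; it follows, for instance, from the Hardy--Littlewood results in Duren's book \cite{Du70}. With $s=1/p-1/2$ we have $(1+n^2)^{-s}\asymp(n+1)^{1-2/p}$. Since the negative Fourier coefficients of $\phi$ vanish, it suffices to show
\[
\sum_{n\ge0}(n+1)^{1-2/p}|\hat{\phi}(n)|^2\le C\|\phi\|_{H^p}^2 .
\]
By homogeneity we normalize $\|\phi\|_{H^p}=1$.

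The case $p=2$ is trivial, since then $s=0$ and we have Parseval. For $p<2$, the first step is the pointwise bound $|\hat{\phi}(n)|\le C(n+1)^{1/p-1}$. This follows from the Cauchy estimate on the circle of radius $r=1-1/(n+1)$, combined with the growth bound $M_\infty(r,\phi)\le C(1-r)^{-1/p}$ for $H^p$ functions.

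The second step writes
\[
|\hat{\phi}(n)|^2=|\hat{\phi}(n)|^p\,|\hat{\phi}(n)|^{2-p}\le C|\hat{\phi}(n)|^p(n+1)^{(1/p-1)(2-p)} .
\]
Multiplying by $(n+1)^{1-2/p}$, the exponent of $n+1$ becomes
\[
1-\frac{2}{p}+\Bigl(\frac{1}{p}-1\Bigr)(2-p)=1-\frac{2}{p}+\frac{2}{p}-1-2+p=p-2 .
\]
So the sum is dominated by $C\sum_n(n+1)^{p-2}|\hat{\phi}(n)|^p$, which is bounded by Hardy--Littlewood. This gives the inclusion $H^p\subset W^{-s,2}(\TT)$, where the distribution associated to $\phi$ has Fourier coefficients $\hat{\phi}(n)$ for $n\ge0$ and $0$ otherwise, as noted above. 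It also gives \eqref{E:HardySobolev}, with the constant depending only on $p$.

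The main obstacle is justifying the Hardy--Littlewood coefficient inequality for $p<1$, together with the growth estimate. If I cannot simply cite them, I would instead factor $\phi=BF$ with $F$ zero-free, and use $F^{p/2}\in H^2$ to deduce them. Another possibility is an interpolation or atomic decomposition argument. Everything else is the elementary exponent bookkeeping above.
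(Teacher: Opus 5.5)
There is a genuine gap in your first step. You use the bound $|\hat{\phi}(n)|\le C(n+1)^{1/p-1}\|\phi\|_{H^p}$ for every $p<2$, but for $1<p<2$ it is false. Take $\phi(z)=z^N$: then $\|\phi\|_{H^p}=1$ and $\hat{\phi}(N)=1$, whereas $(N+1)^{1/p-1}\to0$.

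So your argument only covers $0<p\le1$, together with the trivial case $p=2$. At $p=1$ the bound is just $|\hat{\phi}(n)|\le\|\phi\|_{H^1}$, and the Hardy--Littlewood inequality reduces to Hardy's inequality. This is why the paper treats $p\in[1,2]$ separately, citing a different result (\cite{He68/69}).

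For $1<p<2$ you can keep your scheme if you replace the pointwise bound by Hausdorff--Young, $\|(\hat{\phi}(n))_{n}\|_{\ell^{p'}}\le\|\phi\|_{H^p}$, and then apply H\"older with exponents $p,p'$:
\begin{align*}
\sum_{n\ge0}(n+1)^{1-2/p}|\hat{\phi}(n)|^2
&=\sum_{n\ge0}\bigl((n+1)^{1-2/p}|\hat{\phi}(n)|\bigr)\,|\hat{\phi}(n)|\\
&\le\Bigl(\sum_{n\ge0}(n+1)^{p-2}|\hat{\phi}(n)|^p\Bigr)^{1/p}\Bigl(\sum_{n\ge0}|\hat{\phi}(n)|^{p'}\Bigr)^{1/p'}.
\end{align*}
This works because $((n+1)^{1-2/p})^p=(n+1)^{p-2}$. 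The first factor is bounded by Hardy--Littlewood and the second by Hausdorff--Young, so each is $O(\|\phi\|_{H^p})$.

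Even for $0<p<1$, where the bound $|\hat{\phi}(n)|\le C(n+1)^{1/p-1}\|\phi\|_{H^p}$ is true, your derivation of it does not work. Cauchy's estimate on $|z|=r=1-1/(n+1)$, combined with $M_\infty(r,\phi)\le C(1-r)^{-1/p}\|\phi\|_{H^p}$, only gives $|\hat{\phi}(n)|\le C(n+1)^{1/p}$. That is a factor $n+1$ too weak. Fed into your exponent bookkeeping, it leaves you needing $\sum_n|\hat{\phi}(n)|^p<\infty$, which fails in general; $\phi(z)=1/(1-z)$ is a counterexample.

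What you need is the Hardy--Littlewood integral-mean estimate $M_1(r,\phi)\le C(1-r)^{1-1/p}\|\phi\|_{H^p}$ for $0<p<1$, together with $|\hat{\phi}(n)|r^n\le M_1(r,\phi)$. With these two corrections the proof is complete. For $p\le1$ it is then the classical argument (Hardy--Littlewood inequality plus the $O(n^{1/p-1})$ coefficient bound) behind the result the paper cites from \cite{Du70}.
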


\begin{proof}
For $p\in[1,2]$, this is a consequence of \cite[Theorem~1]{He68/69}.
For $p\in(0,1)$ it follows from \cite[Theorem~6.6]{Du70}.
\end{proof}

If $u,v\in L^2(\TT)$, then $uv\in L^1(\TT)$ and its Fourier coefficients are given by
\[
\hat{uv}(m)=\sum_{n\in \ZZ}\hat{u}(n)\hat{v}(m-n) \quad(m\in\ZZ).
\]
If $u\in W^{s,2}(\TT)$ and $v\in W^{-s,2}(\TT)$
for some $s\in[0,\infty)$, then we can reverse-engineer this
last formula to define the product $uv$ as a distribution on $\TT$.
The details are elaborated in the following lemma.

\begin{lemma}\label{L:prod}
Let $s\in[0,\infty)$, let $u\in W^{s,2}(\TT)$ and $v\in W^{-s,2}(\TT)$.
\begin{enumerate}[\normalfont(i)]
\item For each $m\in\ZZ$, we have
\begin{equation}\label{E:convest}
\sum_{n\in\ZZ}|\hat{u}(n)\hat{v}(m-n)|
\le 2^{s/2}(1+m^2)^{s/2}\|u\|_{W^{s,2}(\TT)}\|v\|_{W^{-s,2}(\TT)}.
\end{equation}
\item There exists a unique distribution $uv$ on $\TT$ such that
\begin{equation}\label{E:prodcoeffs}
\hat{uv}(m)=\sum_{n\in \ZZ}\hat{u}(n)\hat{v}(m-n) \quad(m\in\ZZ).
\end{equation}
\item
There exists a constant $C_s$ such that, for all $\psi\in C^\infty(\TT)$,
\begin{equation}\label{E:prodest}
|\langle uv,\psi\rangle|\le C_s\|u\|_{W^{s,2}(\TT)}\|v\|_{W^{-s,2}(\TT)}\sum_{0\le j\le s+2}\|\psi^{(j)}\|_{L^\infty(\TT)}.
\end{equation}
\end{enumerate}
\end{lemma}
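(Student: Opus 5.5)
The plan is to prove the three parts in order: a weighted Cauchy--Schwarz estimate for (i), then a distribution defined through its Fourier coefficients for (ii), then a direct estimate of the pairing for (iii).

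For (i), write
\[
|\hat{u}(n)\hat{v}(m-n)| = (1+n^2)^{s/2}|\hat{u}(n)|\cdot(1+(m-n)^2)^{-s/2}|\hat{v}(m-n)|\cdot \frac{(1+(m-n)^2)^{s/2}}{(1+n^2)^{s/2}}.
\]
The key elementary inequality is a Peetre-type bound: $1+(m-n)^2\le 2(1+m^2)(1+n^2)$. This follows from $(m-n)^2\le 2m^2+2n^2$ and $1+2m^2+2n^2\le 2(1+m^2)(1+n^2)$. Raising it to the power $s/2\ge0$ bounds the last factor by $2^{s/2}(1+m^2)^{s/2}$. Summing over $n$ and applying Cauchy--Schwarz to the two remaining weighted sequences then gives exactly \eqref{E:convest}.

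For (ii), define $c_m:=\sum_{n}\hat{u}(n)\hat{v}(m-n)$, which converges absolutely by (i). By (i) again, $|c_m|\le 2^{s/2}(1+m^2)^{s/2}\|u\|\|v\|$, so the sequence grows at most polynomially. We then set
\[
\langle uv,\psi\rangle:=\sum_{m\in\ZZ}c_m\hat{\psi}(-m),
\]
which converges because $\hat\psi$ decays rapidly. This functional is linear and has the Fourier coefficients $c_m$, since $\hat{\psi}(-n)$ for $\psi=e^{-im\theta}$ is the Kronecker delta $\delta_{n,m}$. Uniqueness holds because a distribution is determined by its Fourier coefficients, via the recovery formula quoted above. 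That $uv$ really is a distribution, i.e.\ satisfies \eqref{E:distcond}, comes out of (iii).

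For (iii), the only step needing care is choosing the right number of derivatives. Integration by parts gives $|\hat\psi(k)|\le |k|^{-j}\|\psi^{(j)}\|_\infty$, hence
\[
(1+|k|)^{J}|\hat\psi(k)|\le C_J\sum_{j\le J}\|\psi^{(j)}\|_\infty .
\]
Take $J$ to be the least integer with $J\ge s+2$; then $J\le s+3$, although we only use the derivatives up to order $J$. Note that $\sum_{j\le J}\|\psi^{(j)}\|_\infty$ equals the sum over $0\le j\le s+2$ only when $J\le s+2$. I would therefore instead take $J=\lfloor s\rfloor+2$, which satisfies $J\le s+2$ and $J>s+1$.

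With this choice,
\[
|\langle uv,\psi\rangle|\le 2^{s/2}\|u\|\|v\|\sum_m (1+m^2)^{s/2}(1+|m|)^{-J}\cdot C_J\sum_{j\le J}\|\psi^{(j)}\|_\infty .
\]
Since $(1+m^2)^{s/2}\le(1+|m|)^s$, the sum over $m$ is at most $\sum_m(1+|m|)^{s-J}$. This is finite because $J-s>1$, which gives \eqref{E:prodest}.

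The main obstacle is this exponent bookkeeping, making sure the derivative count stays within $s+2$ while the series in $m$ still converges. The choice $J=\lfloor s\rfloor+2$ resolves it.
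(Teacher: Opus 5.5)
Your proposal is correct and follows the paper's proof essentially step for step: the same Peetre-type bound $1+(m-n)^2\le 2(1+m^2)(1+n^2)$ combined with Cauchy--Schwarz for (i), and the same definition $\langle uv,\psi\rangle:=\sum_m c_m\hat{\psi}(-m)$ for (ii)--(iii). The only deviation is in bounding $\sum_m(1+m^2)^{s/2}|\hat{\psi}(-m)|$. The paper applies Cauchy--Schwarz against $\|\psi^{(k)}\|_{L^2}$ with $k$ the least integer $>s+1/2$. You instead use the pointwise decay $|\hat{\psi}(m)|\le |m|^{-J}\|\psi^{(J)}\|_{L^\infty}$ with $J=\lfloor s\rfloor+2>s+1$. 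Both choices stay within the $s+2$ derivative budget. In a write-up, drop the abandoned first choice of $J$ and state $J=\lfloor s\rfloor+2$ directly.
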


\begin{proof}
By Cauchy--Schwarz, for each $m\in\ZZ$ we have
\begin{align*}
&\sum_{n\in\ZZ}| \hat{u}(n)\hat{v}(m-n)|\\
&\le \Bigl(\sum_{n\in\ZZ}(1+n^2)^s|\hat{u}(n)|^2\Bigr)^{1/2}
\Bigl(\sum_{n\in\ZZ}(1+n^2)^{-s}|\hat{v}(m-n)|^2\Bigr)^{1/2}\\
&\le\sup_{n\in\ZZ}\frac{(1+n^2)^{-s/2}}{(1+(m-n)^2)^{-s/2}}\|u\|_{W^{s,2}(\TT)}\|v\|_{W^{-s,2}(\TT)}\\
&\le 2^{s/2}(1+m^2)^{s/2}\|u\|_{W^{s,2}(\TT)}\|v\|_{W^{-s,2}(\TT)}.
\end{align*}
This establishes (i).
We shall prove (ii) and (iii) together.
Define a linear functional $uv:C^\infty(\TT)\to\CC$ by
\[
\langle uv,\psi\rangle:=\sum_{m\in\ZZ}\Bigl(\sum_{n\in\ZZ}\hat{u}(n)\hat{v}(m-n)\Bigr)\hat{\psi}(-m) \quad(\psi\in C^\infty(\TT)).
\]
Then, for each $\psi\in C^\infty(\TT)$, we have
\begin{align*}
|\langle uv,\psi\rangle|
&\le \sum_{m\in\ZZ}\Bigl|\sum_{n\in\ZZ}\hat{u}(n)\hat{v}(m-n)\Bigr||\hat{\psi}(-m)|\\
&\le\sum_{m\in\ZZ}2^{s/2}(1+m^2)^{s/2}\|u\|_{W^{s,2}(\TT)}\|v\|_{W^{-s,2}(\TT)}|\hat{\psi}(-m)|
\end{align*}
and, if $k$ is the smallest integer such that $k>s+1/2$, then
\begin{align*}
&\sum_{m\in\ZZ}(1+m^2)^{s/2}|\hat{\psi}(-m)|\\
&\le|\hat{\psi}(0)|+\Bigl(\sum_{m\in\ZZ\setminus\{0\}}\frac{(1+m^2)^s}{m^{2k}}\Bigr)^{1/2}\Bigl(\sum_{m\in\ZZ\setminus\{0\}}|m^{k}\hat{\psi}(-m)|^2\Bigr)^{1/2}\\
&\le \|\psi\|_{L^2(\TT)}+\Bigl(\sum_{m\in\ZZ\setminus\{0\}}\frac{(1+m^2)^s}{m^{2k}}\Bigr)^{1/2}\|\psi^{(k)}\|_{L^2(\TT)}.
\end{align*}
Combining these observations leads quickly to the estimate \eqref{E:prodest}.
It also shows that $uv$ is a distribution on $\TT$. Clearly
the Fourier coefficients of $uv$ are given by~\eqref{E:prodcoeffs}, and they determine $uv$ uniquely.
\end{proof}

\begin{proof}[Proof of Theorem~\ref{T:Hp}]
(i) Let $s:=1/p-1/2$, so $s\ge0$.
By Lemma~\ref{L:HardySobolev}, we have $\phi\in W^{-s,2}(\TT)$, 
and by the hypothesis \eqref{E:Hphyp},
we have $f\in W^{s,2}(\TT)$. Hence, by Lemma~\ref{L:prod}\,(i), 
for each $m\in\ZZ$ we have
\[
\sum_{n\ge0}|\overline{\hat{\phi}(n)}\hat{f}(m+n)|
\le 2^{s/2}(1+m^2)^{s/2}\|\phi\|_{W^{-s,2}(\TT)}\|f\|_{W^{s,2}(\TT)}<\infty.
\]
This proves (i). We also  note for future use that $\overline{\phi}f$ is a distribution 
on $\TT$ satisfying
\[
\hat{\overline{\phi}f}(m)=\sum_{n\in\ZZ}\overline{\hat{\phi}(n)}\hat{f}(m+n) \quad (m\in\ZZ).
\]

(ii) The argument here is broadly similar to that in the proof of Theorem~\ref{T:H2}, but more care is required because 
some of the terms being manipulated 
are distributions rather than functions.

Suppose that the condition \eqref{E:Hpcond} holds.
Set 
\[
g(z):=\sum_{m\ge0}\Bigl(\sum_{n\ge0}\overline{\hat{\phi}(n)}\hat{f}(m+n) \Bigr)z^m \quad(z\in\DD).
\]
Then  $g\in H^2$ and $\hat{g}(m)=\hat{\overline{\phi}f}(m)$
for all $m\ge0$, so $(\overline{\phi}f-g)$ is a distribution on $\TT$
whose non-negative Fourier coefficients all vanish.
Let $r\in(0,1)$ and let $a_r$ be the $r$-dilation of $a$.
Since $a_r$ is holomorphic on a neighbourhood of~$\overline{\DD}$, we can multiply by $\overline{a}_r$
to obtain a distribution $\overline{a}_r(\overline{\phi}f-g)$ that still has
the property that its non-negative Fourier coefficients vanish.
We claim that $\overline{a}_r(\overline{\phi}f-g)$ converges in
distribution to $(\overline{b}f-\overline{a}g)$ as $r\to1$.
If so, then the non-negative Fourier coefficients of
$(\overline{b}f-\overline{a}g)$  all vanish. Since
$(\overline{b}f-\overline{a}g)\in L^2(\TT)$, it follows that
$P_+(\overline{b}f-\overline{a}g)=0$, where $P_+:L^2(\TT)\to H^2$
is the canonical projection. We deduce that $T_{\overline{b}}(f)=T_{\overline{a}}(g)$.
By Theorem~\ref{T:TbTa}, it follows that $f\in \cH(b)$ and that
\[
\|f\|_{\cH(b)}^2=\|f\|_{H^2}^2+\|g\|_{H^2}^2
=\sum_{m\ge0}|\hat{f}(m)|^2+\sum_{m\ge0}\Bigl|\sum_{n\ge0}\overline{\hat{\phi}(n)}\hat{f}(m+n)\Bigr|^2,
\]
which establishes \eqref{E:Hpformula}.

It remains to justify the claim that 
$\overline{a}_r(\overline{\phi}f-g)$ converges in
distribution to $(\overline{b}f-\overline{a}g)$ as $r\to1$.
Since $g\in H^2$ and $a\in H^\infty$, the
dominated convergence theorem implies 
 that $\|\overline{a}_rg-\overline{a}g\|_{L^2(\TT)}\to0$, 
 so in particular $\overline{a}_rg$ converges 
 in distribution to $\overline{a}g$. 
 The proof that $\overline{a}_r(\overline{\phi}f)$ converges
 in distribution to $\overline{b}f$ is more delicate.
 The first step is to show that 
 $\overline{a}_r(\overline{\phi}f)=(\overline{a_r\phi})f$.
 For this, it suffices to check that the two sides have the
 same Fourier coefficients, and a calculation shows that
 this boils down to showing that a certain double series commutes.
 Since $a_r$ is holomorphic on a neighbourhood of $\overline{\DD}$,
 its Fourier coefficients decay exponentially, which implies that
 the double series in question converges absolutely, and hence it does commute.
 So indeed 
 $\overline{a}_r(\overline{\phi}f)=(\overline{a_r\phi})f$
 for each $r<1$. Next,  by the dominated convergence theorem,
 we have $\|(a_r-a)\phi\|_{L^p(\TT)}\to0$ as $r\to1$.
 As  $a,a_r\in H^\infty$, $\phi\in H^p$ and $a\phi=b$, 
 this implies that $\|a_r\phi-b\|_{H^p}\to0$.
 From \eqref{E:HardySobolev} it follows that
 $\|\overline{a_r\phi}-\overline{b}\|_{W^{-s,2}(\TT)}\to0$.
 By \eqref{E:prodest}, this in turn implies that 
 $\langle (\overline{a_r\phi}-\overline{b})f,\psi\rangle\to0$
 for each $\psi\in C^\infty(\TT)$, in other words,
 that $(\overline{a_r\phi})f$ 
 converges in distribution to~$\overline{b}f$.
 This completes the justification of the claim 
 and with it, the proof of the
 theorem.
\end{proof}

%%%%%%%%%%%%%%%%%%%%%%%%%%%%%%%

\section{The case when $\phi$ is rational}\label{S:rational}

The de Branges--Rovnyak spaces $\cH(b)$ with 
rational $b$ have been studied by a number of authors.
References include \cite{CR13, FHR16, LMNS24,LN17,LGR21}.
In particular, there are interesting connections with Dirichlet-type spaces,
which were first noticed by Sarason \cite{Sa97}.

Let $(b,a)$ be a Pythagorean pair and let $\phi=b/a$. Then
\[
b \text{~is rational~}\iff (b,a) \text{~are both rational~}\iff \phi \text{~is rational}.
\]
In this case, the zeros of $a$ on $\TT$ are exactly the poles of
$\phi$ on $\TT$, with the same multiplicities. 
Listing them (with multiplicity) as $\lambda_1,\dots,\lambda_k$,
we have
\begin{equation}\label{E:rationalH(b)}
f\in \cH(b)\iff f(z)=g(z)\prod_{j=1}^k(z-\lambda_j)+p(z),
\end{equation}
where $g\in H^2$ and $p$ is a polynomial of degree at most $k-1$
(see \cite[Lemma~4.3]{CR13}).

Our goal in this section is to establish the following theorem.

\begin{theorem}\label{T:rational}
Let $(b,a)$ be a rational Pythagorean pair, and let $\phi:=b/a$.
\begin{enumerate}[\normalfont(i)]
\item If  $f\in H^2$ and
\begin{equation}\label{E:rationalcond}
\sum_{m\ge0}\Bigl|\sum_{n\ge0}\overline{\hat{\phi}(n)}\hat{f}(m+n)\Bigr|^2<\infty,
\end{equation}
then $f\in\cH(b)$ and
\begin{equation}\label{E:rationalformula}
\|f\|_{\cH(b)}^2=\sum_{m\ge0}|\hat{f}(m)|^2
+\sum_{m\ge0}\Bigl|\sum_{n\ge0}\overline{\hat{\phi}(n)}\hat{f}(m+n)\Bigr|^2.
\end{equation}
\item If all the poles of $\phi$ on $\TT$ are simple, and if
$f\in\cH(b)$, then \eqref{E:rationalcond} holds.
\item If $\phi$ has a multiple pole on $\TT$,
then there exist $f\in\cH(b)$ and $m\ge0$ such that the series
\[
\sum_{n\ge0} \overline{\hat{\phi}(n)}\hat{f}(m+n)
\]
diverges, and consequently \eqref{E:rationalcond} and \eqref{E:rationalformula} fail to hold
for this $f$.
\end{enumerate}
\end{theorem}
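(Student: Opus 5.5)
For part~(i), I would write $\phi=\phi_0+\phi_1$, where $\phi_1$ is the sum of the principal parts of $\phi$ at its poles on $\TT$ and $\phi_0$ is rational with no poles on $\overline{\DD}$. Then $\hat\phi_0(n)$ decays exponentially, so the inner series for $\phi_0$ converge absolutely. For a pole $\lambda\in\TT$ of order $j$, the coefficients of $(1-\overline{\lambda}z)^{-j}$ are $\binom{n+j-1}{j-1}\overline{\lambda}^n$, which grow like $n^{j-1}$. For $f\in H^2$ the series $\sum_n \overline{\hat\phi(n)}\hat f(m+n)$ need not converge absolutely, so I would read condition \eqref{E:rationalcond} as implicitly asserting that these series converge, presumably in the sense of ordinary partial sums. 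The key algebraic identity I aim for is this: if $c_m:=\sum_{n\ge0}\lambda^n\hat f(m+n)$ converges for every $m$, then $c_m-\lambda c_{m+1}=\hat f(m)$. Formally this says that $c(z):=\sum_m c_m z^m$ satisfies $(1-\overline{\lambda}\cdot)$-type relations with $f$, i.e.\ $c$ is obtained from $T_{\overline{\phi}}$ applied to $f$ in a distributional sense. Iterating handles higher-order poles.

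The cleanest route is a distributional argument modelled on the proof of Theorem~\ref{T:Hp}. Set $g(z):=\sum_{m\ge0}c_m z^m$, where $c_m$ is the inner sum; by hypothesis $g\in H^2$. Since $a$ is rational, I would write $a=q\,a_0$ with $q(z)=\prod_j(z-\lambda_j)$ and $a_0$ rational without zeros on $\overline{\DD}$ (outer, invertible in $H^\infty$). Then $\overline{b}=\overline{\phi}\,\overline{a}$. I want to show $P_+(\overline{b}f-\overline{a}g)=0$, i.e.\ that the coefficients of $\overline{b}f$ and $\overline{a}g$ agree for $m\ge0$. Multiplication by $\overline{a}$ acts on coefficient sequences as a finite-band operator (for $q$) composed with an exponentially decaying one (for $a_0$). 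So it suffices to check, at the level of coefficient sequences, that applying the ``$\overline{a}$-operator'' to $(c_m)$ reproduces $\hat{\overline{b}f}(m)=\sum_n\overline{\hat b(n)}\hat f(m+n)$ for $m\ge0$; the latter converges absolutely since $b$ is rational with no poles on $\overline{\DD}$. This is a rearrangement of sums. Multiplication by $\overline{(z-\lambda)}$ telescopes the divergent-looking pole contributions into finite combinations (using the identity $c_m-\lambda c_{m+1}=\hat f(m)$ above), while the $a_0$ and $\phi_0$ parts involve absolutely convergent double series. Once $T_{\overline b}f=T_{\overline a}g$ is established, Theorem~\ref{T:TbTa} gives $f\in\cH(b)$ and \eqref{E:rationalformula}. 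The main obstacle is justifying the interchange when the $\phi_1$ sums are only conditionally convergent. For this I would use only finite manipulations of partial sums together with the fact that $\hat f(m)\to0$.

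For part~(ii), by \eqref{E:rationalH(b)} write $f=g\prod_j(z-\lambda_j)+p$ with distinct $\lambda_j$, $g\in H^2$ and $p$ a polynomial. Polynomials are covered by Theorem~\ref{T:CGR}. For $f=(z-\lambda)h$ with $h\in H^2$, the inner sums against $\lambda^n$ telescope: $\sum_{n\ge0}\lambda^n(\hat h(m+n-1)-\lambda\hat h(m+n))$ has partial sums $\hat h(m-1)-\lambda^{N+1}\hat h(m+N)\to\hat h(m-1)$. So these sums converge and the resulting sequence lies in $\ell^2$. The remaining parts of $\phi$ are bounded, giving $\ell^2$ sequences via bounded Toeplitz operators. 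Hence \eqref{E:rationalcond} holds.

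For part~(iii), suppose $\lambda$ is a pole of order $j\ge2$. Then $\hat\phi(n)\sim C n^{j-1}\overline{\lambda}^n$, up to lower-order terms and other poles with bounded-or-smaller growth, or distinct unimodular frequencies. I would take $f=(z-\lambda)^{k}h+p$ as in \eqref{E:rationalH(b)}, choosing $h\in H^2$ with $\hat h(n)=\lambda^n/(n+1)^{1/2+\epsilon}$ times something tuned. Then $\hat f(n)\approx \lambda^n n^{-1/2-\epsilon}$ up to differencing, so $\overline{\hat\phi(n)}\hat f(m+n)$ grows roughly like $n^{j-1}$ times a difference of order $k$. The simplest choice is $f=(z-\lambda)h$ with $\hat h(n)=\overline{\lambda}^{\,n}\!/(n+1)$, which gives terms of size $\sim n^{j-2}/n\cdot$const. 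For $j=2$ these are harmonic-like terms with constant phase, so the series diverges. For larger $j$ the divergence is even worse, and I would check that the contributions of other poles cannot cancel, by comparing growth rates.
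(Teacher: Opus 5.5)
Your parts (i) and (ii) are sound. Part (i) is essentially the paper's argument. You apply the finite-band operator $T_{\overline{q}}$ to the whole sequence $(c_m)$, not pole by pole, since only the full series is assumed to converge. This replaces $\phi$ by $q\phi$, which has no poles on $\overline{\DD}$, and everything after that converges absolutely. The paper does the same with $q$ the full denominator of $\phi=p/q$, obtaining $T_{\overline{q}}g=T_{\overline{p}}f$, and then divides by $\overline{r}$.

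Part (ii) is correct by a different route. You use \eqref{E:rationalH(b)}, write $f$ minus a polynomial as $(z-\lambda_i)h_i$ with $h_i\in H^2$ for each simple unimodular pole, and telescope against each principal part. The paper instead goes from $T_{\overline{b}}f=T_{\overline{a}}g$ to $T_{\overline{p}}f=T_{\overline{q}}g$ and inverts $\overline{q}$ using the bounded Taylor coefficients of $1/q$ (Lemma~\ref{L:bdedcoeffs}\,(ii)). That avoids the external characterization and identifies the sums directly as $\hat g(m)$.

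The gap is in (iii). Your concrete choice $f=(z-\lambda)h$ with $\hat h(n)=\overline{\lambda}^{\,n}/(n+1)$ is not in $\cH(b)$ when $\lambda$ has order $j\ge2$. By \eqref{E:rationalH(b)}, $f-P\in\prod_i(z-\lambda_i)H^2$ for some polynomial $P$, so in particular $(f-P)/(z-\lambda)^2\in H^2$. But $h(z)=-\log(1-\overline{\lambda}z)/(\overline{\lambda}z)$, so with $w=1-\overline{\lambda}z$ you get $f=\lambda w\log w\,(1+O(w))$ near $\lambda$. No polynomial $P$ makes $(f-P)/w^2$ square-integrable near $\lambda$ on $\TT$.

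Inserting the full factor does not rescue your heuristic ``$n^{j-1}$ times a difference of order $k$'' either. Suppose $\lambda$ has maximal order $j$ and $f=h\prod_i(z-\lambda_i)$ with $h$ regularly varying and singular at $\lambda$, say $\hat h(n)=\overline{\lambda}^{\,n}(n+1)^{-\beta}$ with $\beta>1/2$. The $j$-fold differencing gives $\hat f(n)=O(n^{-\beta-j})$, so the terms $\overline{\hat\phi(n)}\hat f(m+n)$ are $O(n^{-1-\beta})$ and the series converges absolutely. The vanishing at $\lambda$ forced by membership in $\cH(b)$ is exactly what offsets the growth of $\hat\phi$. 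Also, ``comparing growth rates'' cannot exclude cancellation between distinct unimodular poles of the same maximal order: they have identical growth and differ only in phase.

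What is missing is a device that stops the polynomial factor from producing cancellation in $\hat f$. The paper takes the lacunary function $g(z)=\sum_{n\ge1}z^{Kn}/n^{\alpha}$ with $K>k$ and $\alpha\in(\frac12,1)$, and sets $f=g\prod_j(1-\overline{\lambda}_jz)\in\cH(b)$. Since the gaps exceed the degree of the product, $\hat f(Kn)=n^{-\alpha}$ exactly. If every series converged, its terms would tend to $0$, giving $\hat\phi(Kn-m)=o(n^{\alpha})$ for each $m\ge0$ and hence $\hat\phi(N)=O(N^{\alpha})$. Lemma~\ref{L:bdedcoeffs}\,(i), via the bound $|\phi(z)|\le C(1-|z|)^{-1-\alpha}$, then forces all unimodular poles to be simple. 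Because that argument never separates the poles, it also disposes of the cancellation issue.

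A non-lacunary repair is possible: take $\hat h(n)=\mu^n(n+1)^{-\beta}$ with $\beta\in(\frac12,1)$ and $\overline{\mu}$ not a pole, which gives $|\hat f(n)|\asymp n^{-\beta}$. But your suggested frequency $\mu=\lambda$ fails for $\lambda=\pm1$ (e.g.\ $\phi=1/(1-z)^2$), because then $h$ is singular at the pole itself. You would also still have to show that the leading terms $\sum_i C_i\overline{\lambda}_i^{\,n}$ from the poles of maximal order do not tend to $0$.
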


\begin{remark}
It is implicit in \eqref{E:rationalcond}
that each individual series 
$\sum_{n\ge0}\overline{\hat{\phi}(n)}\hat{f}(m+n)$
converges (not necessarily absolutely).
\end{remark}

For  the proof of Theorem~\ref{T:rational},
we use the following lemma.

\begin{lemma}\label{L:bdedcoeffs}
Let $\rho(z)$ be a rational function with poles outside $\DD$.
\begin{enumerate}[\normalfont(i)]
\item If there exists $\alpha\in(0,1)$ such that $\hat{\rho}(j)=O(j^\alpha)$ as $j\to\infty$,
then all the poles of $\rho$ on $\TT$ are simple.
\item Conversely, if all the poles of $\rho$ on $\TT$ are simple, then
 $\sup_j|\hat{\rho}(j)|<\infty$.
 \end{enumerate}
\end{lemma}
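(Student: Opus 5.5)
We will prove both parts by partial fractions. Since $\rho$ is rational with no poles in $\DD$, we write
\[
\rho(z)=q(z)+\sum_{\lambda}\sum_{k=1}^{m_\lambda}\frac{c_{\lambda,k}}{(1-\overline{\lambda}z)^k},
\]
where $q$ is a polynomial and $\lambda$ runs over the poles of $\rho$, with $|\lambda|\ge1$ and multiplicities $m_\lambda$. The coefficients of $(1-\overline{\lambda}z)^{-k}$ are $\binom{j+k-1}{k-1}\overline{\lambda}^j$. For $|\lambda|>1$ these decay exponentially. For $|\lambda|=1$ they have modulus $\binom{j+k-1}{k-1}$, which is a polynomial in $j$ of exact degree $k-1$. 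Thus, up to an exponentially small error and finitely many terms,
\[
\hat\rho(j)=\sum_{|\lambda|=1}P_\lambda(j)\,\overline{\lambda}^{\,j},
\]
where $P_\lambda$ is a polynomial of degree exactly $m_\lambda-1$. Its leading coefficient is $c_{\lambda,m_\lambda}/(m_\lambda-1)!\neq0$.

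Part (ii) is then immediate. If every unimodular pole is simple, each $P_\lambda$ is constant, so $\hat\rho(j)$ is a bounded quantity plus an exponentially decaying one.

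For part (i), suppose some unimodular pole has multiplicity at least $2$. Let $d\ge1$ be the maximal degree of $P_\lambda$ among the unimodular poles, and let $\lambda_1,\dots,\lambda_r$ be the distinct unimodular poles attaining it, with leading coefficients $\beta_1,\dots,\beta_r\neq0$. Dividing by $j^d$ gives
\[
\frac{\hat\rho(j)}{j^d}=\sum_{i=1}^r\beta_i\overline{\lambda_i}^{\,j}+O(1/j).
\]
The hypothesis $\hat\rho(j)=O(j^\alpha)$ with $\alpha<1\le d$ forces the left side to tend to $0$. Hence the exponential polynomial $s_j:=\sum_i\beta_i\overline{\lambda_i}^{\,j}$ tends to $0$.

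The main step is to show that this is impossible when the $\lambda_i$ are distinct unimodular points and the $\beta_i$ are nonzero. We plan to use the mean-square (Parseval-type) argument:
\[
\frac1N\sum_{j=0}^{N-1}|s_j|^2=\sum_i|\beta_i|^2+\sum_{i\ne l}\beta_i\overline{\beta_l}\,\frac1N\sum_{j=0}^{N-1}(\overline{\lambda_i}\lambda_l)^j.
\]
Each cross-term average tends to $0$, since $\overline{\lambda_i}\lambda_l\neq1$ and the geometric sums are bounded. So the averages converge to $\sum_i|\beta_i|^2>0$. This contradicts $s_j\to0$, whose Ces\`aro means of $|s_j|^2$ must tend to $0$. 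Hence every unimodular pole is simple.

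Alternatively, one could argue via a Vandermonde determinant on $r$ consecutive indices, but the averaging argument is cleaner and is the approach we will use.
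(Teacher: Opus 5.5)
Your proof is correct. Part (ii) is the same as the paper's: partial fractions, with each term having bounded Taylor coefficients.

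There is one small slip in the expansion. For a pole with $|\lambda|>1$ the partial-fraction term should be $(1-z/\lambda)^{-k}$, not $(1-\overline{\lambda}z)^{-k}$. The latter has its pole at $1/\overline{\lambda}\in\DD$ and exponentially growing coefficients. For $|\lambda|=1$ the two coincide, so nothing downstream changes.

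For part (i) you take a genuinely different route.

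\emph{The paper's route.} It works on the function side. From $|\hat{\rho}(j)|\le C(1+j)^\alpha$ and H\"older's inequality with exponents $1/\alpha$ and $1/(1-\alpha)$, it obtains $|\rho(z)|\le C(1-|z|)^{-1-\alpha}$. This is incompatible with a pole of order at least $2$ on $\TT$, since approaching such a pole radially produces growth at least like $(1-r)^{-2}$. The advantage is that localizing near one pole in the $z$-plane automatically separates it from the other unimodular poles, so no cancellation question arises.

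\emph{Your route.} You work on the coefficient side. There, all unimodular poles of the same maximal order $d+1$ contribute at the same scale $j^d$ and could a priori cancel. That is exactly why you need the mean-square step, which uses orthogonality of the distinct characters $j\mapsto\overline{\lambda}^{\,j}$; you carry it out correctly.

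\emph{What each buys.} The paper's argument is shorter and needs only a crude growth bound. Yours gives sharper information: it shows $\limsup_{j}|\hat{\rho}(j)|/j^{d}>0$, so the growth rate of the coefficients exactly detects the maximal order of the unimodular poles. It also handles (i) and (ii) within a single expansion.
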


\begin{proof}
(i) Suppose that $|\hat{\rho}(j)|\le C(1+j)^\alpha$ for all $j\ge0$.
Applying H\"older's inequality with $p=1/\alpha$ and $q=1/(1-\alpha)$, we see that, for all $z\in\DD$,
\begin{align*}
|\rho(z)|&
\le \sum_{j\ge0}C(1+j)^\alpha|z|^j
\le C\Bigl(\sum_{j\ge0}(1+j)|z|^j\Bigr)^\alpha\Bigl(\sum_{j\ge0}|z|^j\Bigr)^{1-\alpha}\\
&=C(1-|z|)^{-2\alpha}(1-|z|)^{-(1-\alpha)}=C(1-|z|)^{-1-\alpha}.
\end{align*}
Evidently, this estimate implies that all the poles of $\rho$ on $\TT$ are simple.

(ii) Suppose that all the poles of $\rho$ on $\TT$
are simple.
Expanding $\rho(z)$ in partial fractions, we can write it
as a polynomial plus a linear combination of fractions of the form 
$1/(z-\lambda)~(|\lambda|=1)$ and $1/(z-\mu)^k~(|\mu|>1, k\ge1)$,
each of which has a Taylor expansion with bounded coefficients.
Hence the Taylor coefficients of $\rho(z)$ are bounded.
\end{proof}

\begin{proof}[Proof of Theorem~\ref{T:rational}]
We begin with some preliminary remarks that apply to all
parts of the proof.
As $\phi$ is rational, we may write it as $\phi:=p/q$, 
where $p,q$ are polynomials with no common zeros. 
By a well-known result of Fej\'er,
there exists a polynomial $r$ with all its zeros
outside $\overline{\DD}$ such that
$|p|^2+|q|^2=|r|^2$ on $\TT$.
Then $(p/r,\,q/r)$ is a Pythagorean pair such that
$(p/r)/(q/r)=\phi$, so, by uniqueness of $(b,a)$, we must have
$p/r=b$ and $q/r=a$.
The poles of $\phi$ on $\TT$ are exactly the zeros
of $q$ on $\TT$, which in turn are exactly the zeros of $a$ on $\TT$.

(i) Suppose that $f\in H^2$ and that \eqref{E:rationalcond} holds. 
Set 
\[
g(z):
=\sum_{m\ge0}\Bigl(\sum_{n\ge0}\overline{\hat{\phi}(n)}\hat{f}(m+n) \Bigr)z^m
\quad(z\in\DD).
\]
Then $g\in H^2$. Also, for each $m\ge0$, we have
\begin{align*}
\sum_{j=0}^{\deg q}\overline{\hat{q}(j)}\hat{g}(j+m)
&=\sum_{j=0}^{\deg q}\overline{\hat{q}(j)}\Bigl(\sum_{n\ge j}\overline{\hat{\phi}(n-j)}\hat{f}(m+n)\Bigr)\\
&=\sum_{n\ge0}\sum_{j=0}^{\min\{n,\deg q\}}
\overline{\hat{q}(j)}\overline{\hat{\phi}(n-j)}\hat{f}(m+n)\\
&=\sum_{n=0}^{\deg p}\overline{\hat{p}(n)}\hat{f}(m+n).
\end{align*}
Hence $T_{\overline{q}}g=T_{\overline{p}}f$,
in other words,  $\overline{p}f-\overline{q}g=\overline{h}$ on $\TT$,
where $h\in H^2_0$.
Dividing through by $\overline{r}$, we obtain $\overline{b}f-\overline{a}g=\overline{h/r}$ on $\TT$, where $h/r\in H^2_0$.
This implies that $T_{\overline{b}}f=T_{\overline{a}}g$.
Using Theorem~\ref{T:TbTa}, we deduce that $f\in\cH(b)$ 
and that
\[
\|f\|_{\cH(b)}^2=\|f\|_{H^2}^2+\|g\|_{H^2}^2
=\sum_{m\ge0}|\hat{f}(m)|^2+\sum_{m\ge0}\Bigl|\sum_{n\ge0}\overline{\hat{\phi}(n)}\hat{f}(m+n)\Bigr|^2,
\]
which establishes \eqref{E:rationalformula}.

(ii) Suppose now that all the poles of $\phi$ on $\TT$
are simple, or equivalently, that all the zeros of $q$ on $\TT$ are simple.  
Let $f\in\cH(b)$.
By Theorem~\ref{T:TbTa}, there exists $g\in H^2$
such that $T_{\overline{b}}f=T_{\overline{a}}g$,
in other words, $\overline{b}f-\overline{a}g=\overline{h}$ on $\TT$,
where $h\in H_0^2$. Multiplying both sides by $\overline{r}$,
we obtain $\overline{p}f-\overline{q}g=\overline{rh}$
on $\TT$, where $rh\in H^2_0$. It follows that
$T_{\overline{p}}f=T_{\overline{q}}g$. 
In terms of coefficients, this says that
\begin{equation}\label{E:rational}
\sum_{n=0}^{\deg p} \overline{\hat{p}(n)}\hat{f}(m+n)
=\sum_{n=0}^{\deg q} \overline{\hat{q}(n)}\hat{g}(m+n)
\quad(m\ge0).
\end{equation}
By Lemma~\ref{L:bdedcoeffs}\,(ii), we can write
$1/q(z)=\sum_{j\ge0} c_jz^j$ for $z\in\DD$,
where $\sup_j|c_j|<\infty$.
Rewriting \eqref{E:rational} as 
\[
\sum_{n=j}^{j+\deg p} \overline{\hat{p}(n-j)}\hat{f}(m+n)
=\sum_{n=j}^{j+\deg q} \overline{\hat{q}(n-j)}\hat{g}(m+n)
\quad(j,m\ge0),
\]
multiplying both sides by $\overline{c}_j$ and summing 
from $j=0$ to $J$, we obtain
\[
\sum_{j=0}^J\overline{c}_j\sum_{n=j}^{j+\deg p} \overline{\hat{p}(n-j)}\hat{f}(m+n)
=\sum_{j=0}^J\overline{c}_j\sum_{n=j}^{j+\deg q} \overline{\hat{q}(n-j)}\hat{g}(m+n)
\quad(m,J\ge0).
\]
After rearrangement, this becomes
\begin{align*}
&\sum_{n=0}^{J+\deg p}\sum_{j=\max\{0,n-\deg p\}}^{\min\{J,n\}}\overline{c}_j \overline{\hat{p}(n-j)}\hat{f}(m+n)\\
&\quad=\sum_{n=0}^{J+\deg q}\sum_{j=\max\{0,n-\deg q\}}^{\min\{J,n\}}\overline{c}_j \overline{\hat{q}(n-j)}\hat{g}(m+n) 
\quad(m,J\ge0).
\end{align*}
Now, since $(1/q(z))p(z)=\phi(z)$, the coefficients $c_j$ satisfy
\[
\sum_{j=\max\{0,n-\deg p\}}^n c_j\hat{p}(n-j)=\hat{\phi}(n)
\quad(n\ge0).
\]
Therefore, for all $m,J\ge0$, we have
\begin{align*}
&\sum_{n=0}^{J+\deg p}\sum_{j=\max\{0,n-\deg p\}}^{\min\{J,n\}}\overline{c}_j \overline{\hat{p}(n-j)}\hat{f}(m+n)\\
&\quad=\sum_{n=0}^{J}\sum_{j=\max\{0,n-\deg p\}}^{n}\overline{c}_j \overline{\hat{p}(n-j)}\hat{f}(m+n)\\
&\qquad+\sum_{n=J+1}^{J+\deg p}\sum_{j=\max\{0,n-\deg p\}}^{J}\overline{c}_j \overline{\hat{p}(n-j)}\hat{f}(m+n)\\
&\quad=\sum_{n=0}^J\overline{\hat{\phi}(n)}\hat{f}(m+n)+B_{J,m},
\end{align*}
where
\[
|B_{J,m}|\le (\deg p)^2(\sup_j|c_j|)(\sup_k|\hat{p}(k)|)(\sup_{n\ge J}|\hat{f}(n)|)=o(1) \quad(J\to\infty).
\]
Likewise, since $(1/q(z))q(z)=1$, the coefficients $c_j$ satisfy
\[
\sum_{j=\max\{0,n-\deg q\}}^n c_j\hat{q}(n-j)=
\begin{cases}
1, &n=0,\\
0, &n>0.
\end{cases}
\]
Therefore, for each $m,J\ge0$,
\begin{align*}
&\sum_{n=0}^{J+\deg q}\sum_{j=\max\{0,n-\deg q\}}^{\min\{J,n\}}\overline{c}_j \overline{\hat{q}(n-j)}\hat{g}(m+n)\\
&\quad=\sum_{n=0}^{J}\sum_{j=\max\{0,n-\deg q\}}^{n}\overline{c}_j \overline{\hat{q}(n-j)}\hat{g}(m+n)\\
&\qquad+\sum_{n=J+1}^{J+\deg q}\sum_{j=\max\{0,n-\deg q\}}^{J}\overline{c}_j \overline{\hat{q}(n-j)}\hat{g}(m+n)\\
&\quad=\hat{g}(m)+A_{J,m},
\end{align*}
where
\[
|A_{J,m}|\le (\deg q)^2(\sup_j|c_j|)(\sup_k|\hat{q}(k)|)(\sup_{n\ge J}|\hat{g}(n)|)=o(1) \quad(J\to\infty).
\]
Combining these observations, we deduce that
\[
\sum_{n\ge0} \overline{\hat{\phi}(n)}\hat{f}(m+n)=\hat{g}(m)
\quad(m\ge0).
\]
Thus
\[
\sum_{m\ge0}\Bigl|\sum_{n\ge0} \overline{\hat{\phi}(n)}\hat{f}(m+n)\Bigr|^2
=\sum_{m\ge0}|\hat{g}(m)|^2=\|g\|_{H^2}^2<\infty,
\]
and so \eqref{E:rationalcond} holds.

(iii) Let $\lambda_1,\dots,\lambda_k$ 
be the poles of $\phi$ on $\TT$,
listed according to multiplicity.
Fix an integer $K>k$ and $\alpha\in(\frac{1}{2},1)$, and define
\[
g(z):=\sum_{n\ge1}\frac{z^{Kn}}{n^{\alpha}}
\quad\text{and}\quad
f(z):=g(z)\prod_{j=1}^k(1-\overline{\lambda}_jz).
\]
Then $g\in H^2$, so
we can apply the criterion \eqref{E:rationalH(b)}
to deduce that $f\in\cH(b)$.

Suppose that the series
\begin{equation}\label{E:series}
\sum_{n\ge 0}\overline{\hat{\phi}(n)}\hat{f}(m+n)
\end{equation}
converges for each $m\ge0$. Then certainly
$\lim_{n\to\infty}\overline{\hat{\phi}(n-m)}\hat{f}(n)=0$
for each $m\ge0$.
Now, because of the gaps in the series defining $g$, we have
\[
\hat{f}(Kn)=\hat{g}(Kn)=1/n^{\alpha} \quad(n\ge 1).
\]
It follows that
\[
\lim_{n\to\infty}\overline{\hat{\phi}(Kn-m)}/n^{\alpha}=0 \quad(m\ge0).
\]
This implies that $\hat{\phi}(n)=O(n^{\alpha})$ as $n\to \infty$.
By Lemma~\ref{L:bdedcoeffs}\,(i), 
it follows that all the poles of $\phi$ on~$\TT$
are simple. 
Thus, if $\phi$ has a multiple pole on $\TT$,
then the series \eqref{E:series} diverges 
for at least one value of $m\ge0$.
\end{proof}

%%%%%%%%%%%%%%%%%%%%%%%%%%%%%%%

\section{Counterexamples}\label{S:counterexamples}

Our goal in this section is to establish the following theorem,
which is a rich source of counterexamples.

\begin{theorem}\label{T:negative}
Let  $\phi_0\in N^+$ be a function that is real on the real axis
and satisfies $\lim_{r\to 1^-}\phi_0(r)=\infty$.
Let $B$ be an infinite Blaschke product whose zeros $(t_n)_{n\ge1}$ lie in $(0,1)$
and satisfy
\begin{equation}\label{E:zerocond}
\inf_{n\ge1}\frac{1-t_{n+1}}{1-t_n}>0
\quad\text{and}\quad
\sup_{n\ge1}\frac{1-t_{n+1}}{1-t_n}<1/2.
\end{equation}
Set $\phi(z):=B(z)^2\phi_0(z)/(1-z)^{1/2}$, and let $(b,a)$ be the Pythagorean
pair with $\phi=b/a$. Then there exists
$f\in\cH(b)$ such that 
$\sum_{n\ge0}\overline{\hat{\phi}(n)}\hat{f}(n)$
diverges. 
\end{theorem}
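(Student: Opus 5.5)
The plan is to build $f$ as a weighted sum of reproducing kernels at points lying between the zeros of $B$, and to detect divergence through Abel means. The tool for membership in $\cH(b)$ is that $T_{\overline b}k_s=\overline{b(s)}k_s$ and $T_{\overline a}k_s=\overline{a(s)}k_s$, where $k_s(z)=1/(1-sz)$ for $s\in(0,1)$. Consequently, if $f=\sum_n c_nk_{s_n}$, then Theorem~\ref{T:TbTa} applies to the candidate $g=\sum_n c_n\overline{\phi(s_n)}k_{s_n}$. Everything is real on the real axis ($\phi_0$ is real there, $B$ has real zeros, and $(1-z)^{1/2}$ is real on $(0,1)$), so all Taylor coefficients are real and conjugations can be dropped.

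\emph{Choice of points.} Using \eqref{E:zerocond}, the sequence $(t_n)$ is separated (indeed interpolating). I will choose points $s_n\in(t_n,t_{n+1})$, for instance with $1-s_n=\sqrt{(1-t_n)(1-t_{n+1})}$. These satisfy three properties.
\begin{itemize}
\item The pseudo-hyperbolic distance from $s_n$ to $\{t_j\}$ is bounded below, so $|B(s_n)|\ge\delta>0$.
\item The sequence $(s_n)$ is itself interpolating.
\item $\phi(s_n)\sim B(s_n)^2\phi_0(s_n)(1-s_n)^{-1/2}$ tends to $\infty$ in absolute value.
\end{itemize}
I would then take $c_n=\epsilon_n(1-s_n)^{1/2}/|\phi(s_n)|$, with $\epsilon_n\ge0$ small and square-summable but with $\sum\epsilon_n=\infty$ (e.g.\ $\epsilon_n=1/n$). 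Since the normalized kernels $(1-s_n^2)^{1/2}k_{s_n}$ form a Riesz sequence, both $f$ and $g$ lie in $H^2$ as soon as $\sum|c_n|^2/(1-s_n)<\infty$ and $\sum|c_n\phi(s_n)|^2/(1-s_n)<\infty$. With the above choice both reduce to $\sum\epsilon_n^2<\infty$, the first using $|\phi(s_n)|\to\infty$. Hence $f\in\cH(b)$ by Theorem~\ref{T:TbTa}.

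\emph{Divergence via Abel means.} Suppose that $\sum_n\hat\phi(n)\hat f(n)$ converged. Then by Abel's theorem $F(r):=\sum_n\hat\phi(n)\hat f(n)r^n$ would have a limit as $r\to1^-$. On the other hand, since $\hat k_s(n)=s^n$, we have $F(r)=\sum_n c_n\phi(rs_n)$. The plan is to exhibit two sequences $r\to1$ along which $F(r)$ behaves differently.
\begin{itemize}
\item Along $r=1$, formally $F(1)=\sum_n c_n\phi(s_n)=\sum_n\pm\epsilon_n(1-s_n)^{1/2}$. It is better to work with $r$ slightly below $1$, where the sign pattern can be arranged; the signs $\operatorname{sign}\phi(s_n)$ can be absorbed into $\epsilon_n$ so that all terms are positive.
\item Along other values of $r$, the double zeros of $B$ kill the leading terms, because $rs_n$ lands near some $t_j$. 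The geometric spacing in \eqref{E:zerocond} (ratio $<1/2$) makes it possible to choose $r_N$ so that $r_Ns_n$ is near a zero of $B$ for all $n$ in a long block.
\end{itemize}

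It may be cleaner to reverse the weighting: choose $\epsilon_n$ with $\sum\epsilon_n^2<\infty$ but blocks of $n$ where $\sum\epsilon_n(1-s_n)^{1/2}|\phi_0(s_n)|\cdots$ is large. One then compares the Abel mean at $r$ near $1$ (where the block contributes a large amount) with the mean at $r=r_N$ (where it contributes almost nothing), so that $F$ oscillates and has no limit.

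\emph{Main obstacle.} The hard step is this last quantitative comparison. It requires controlling $\phi(rs_n)$ uniformly, namely that $|B(rs_n)|^2$ is bounded below at the chosen $r$ and tiny at the other, while $\phi_0$ and $(1-z)^{-1/2}$ vary only by bounded factors over the relevant ranges. It also requires showing that the tail contributions of the off-block terms are negligible. I would first try to make the tails negligible by making the blocks very sparse, with $\epsilon_n$ supported on rapidly separated blocks, so that each Abel mean is dominated by a single block. The hypothesis $\phi_0(r)\to\infty$ is what allows a block of square-summable weight to contribute an amount bounded below.
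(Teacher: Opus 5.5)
There is a genuine gap, and it is not only that the final comparison is hard. With kernels at points $s_n$ where $|B(s_n)|\ge\delta$, the Abel means $F(r)=\sum_n c_n\phi(rs_n)$ actually converge as $r\to1^-$ already in the model case $\phi_0(z)=-\log(1-z)$. So no comparison between two families of radii can show oscillation.

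First, the terms that survive near $r=1$ are summable. With your weights $|c_n\phi(s_n)|=\epsilon_n(1-s_n)^{1/2}$, and $1-s_n<1-t_n$ decays geometrically by \eqref{E:zerocond}. Hence $\sum_n|c_n\phi(s_n)|<\infty$ for every bounded $(\epsilon_n)$, and the divergence of $\sum\epsilon_n$ plays no role. This holds for any weights, blocks or signs: the requirement $g\in H^2$ and the Riesz property force $\sum_n|c_n\phi(s_n)|^2/(1-s_n^2)<\infty$. Since $\sum_n(1-s_n^2)<\infty$, Cauchy--Schwarz then gives $\sum_n|c_n\phi(s_n)|<\infty$.

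Second, take $\phi_0(z)=-\log(1-z)$, which Corollary~\ref{C:p<2} needs. Since $|B|\le1$ and $\phi_0$, $(1-x)^{-1/2}$ are non-negative and increasing on $[0,1)$, you get $|\phi(rs_n)|\le\phi_0(s_n)(1-s_n)^{-1/2}\le\delta^{-2}|\phi(s_n)|$ for all $r\in(0,1)$. Dominated convergence then gives $F(r)\to\sum_nc_n\phi(s_n)$, a finite limit. Steering $r_Ns_n$ onto zeros of $B$ to make $F(r_N)$ small is useless, since every family of Abel means has this same limit. The root problem is that each kernel costs $|\phi(s_n)|$ in $g$. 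Where $|B|$ is bounded below, this is, up to constants, the maximum of $|\phi|$ on $[0,s_n]$, so the growth of $\phi_0$ never pays off.

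The missing idea is to do the opposite and put the kernels \emph{at} the zeros: $f=\sum_{n\ge n_0}c_n(1-t_n^2)^{1/2}k_{t_n}$.
Since $b(t_n)=a(t_n)\phi(t_n)=0$, Lemma~\ref{L:RK} gives $T_{\overline{b}}k_{t_n}=0$. Theorem~\ref{T:TbTa} with $g=0$ then yields $\|k_{t_n}\|_{\cH(b)}=\|k_{t_n}\|_{H^2}$. These kernels cost nothing in $g$, and the series for $f$ converges absolutely in $\cH(b)$ as soon as $\sum c_n<\infty$; no Riesz-sequence argument is needed.

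The Abel mean is $\sum_n c_n(1-t_n^2)^{1/2}\phi(rt_n)$, and at $r=t_n$ it samples $\phi$ at $t_n^2$. Since $1-t_n^2<2(1-t_n)<1-t_{n-1}$, the ratio bound $<1/2$ puts $t_n^2$ strictly between $t_{n-1}$ and $t_n$. Together with the other half of \eqref{E:zerocond}, this keeps $t_n^2$ pseudo-hyperbolically separated from all zeros, so $\inf_n|B(t_n^2)|>0$ (Lemma~\ref{L:Blaschke}).

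Choose $n_0$ with $\phi_0\ge0$ on $[t_{n_0}^2,1)$. As $B^2\ge0$ on $(0,1)$, all terms are non-negative once $r\ge t_{n_0}$. So the Abel mean at $r=t_n$ is at least $c_nB(t_n^2)^2\phi_0(t_n^2)$. Choosing $c_n\ge0$ with $\sum c_n<\infty$ but $\sup_nc_n\phi_0(t_n^2)=\infty$ makes the Abel means unbounded, and Abel's theorem finishes the proof. Note that the square on $B$ is there for positivity, not for the cancellation at other radii that your plan relies on.
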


Before proving Theorem~\ref{T:negative},
we present two applications. 
The first of these is an example
showing that Theorem~\ref{T:H2} is sharp, in the sense
that one cannot replace
the hypothesis $\phi\in H^2$ by $\phi\in \cap_{p<2}H^p$.

\begin{corollary}\label{C:p<2}
There exist a Pythagorean pair $(b,a)$ and $f\in\cH(b)$ such that 
$\phi:=b/a\in\cap_{p<2}H^p$,
 yet $\sum_{n\ge0}\overline{\hat{\phi}(n)}\hat{f}(n)$ diverges.
\end{corollary}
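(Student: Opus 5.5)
We deduce Corollary~\ref{C:p<2} from Theorem~\ref{T:negative} by a concrete choice of $\phi_0$ and $B$. The only real work is checking that the resulting $\phi$ lies in $\cap_{p<2}H^p$.

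First, the choices. For $B$, take zeros $t_n:=1-3^{-n}$. Then $(1-t_{n+1})/(1-t_n)=1/3$ for all $n$, so both conditions in \eqref{E:zerocond} hold. Also $\sum(1-t_n)<\infty$, so $B$ is a genuine infinite Blaschke product. For $\phi_0$, we need a function in $N^+$, real on the real axis, tending to $\infty$ as $r\to1^-$, but growing slowly enough that multiplying by $(1-z)^{-1/2}$ keeps us in every $H^p$ with $p<2$. A natural candidate is
\[
\phi_0(z):=\log\frac{e}{1-z}.
\]
This is holomorphic on $\DD$ and real on $(-1,1)$. Since $\operatorname{Re}(1-z)>0$ on $\DD$, we have $|\arg(1-z)|<\pi/2$, so $\operatorname{Re}\phi_0\ge 1-\log 2>0$ on $\DD$. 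Moreover $\phi_0(r)=1-\log(1-r)\to\infty$ as $r\to1^-$. Finally, $\phi_0\in H^q$ for every $q<\infty$, hence $\phi_0\in N^+$.

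Next, set $\phi:=B^2\phi_0(1-z)^{-1/2}$. Since $|B|\le1$, it suffices to show that $\psi(z):=\phi_0(z)(1-z)^{-1/2}\in H^p$ for each $p<2$. Fix $p<2$ and choose $p<p'<2$. Because $\operatorname{Re}(1-z)>0$, the function $(1-z)^{-1/2}$ lies in $H^{p'}$; indeed $\int_\TT|1-e^{i\theta}|^{-p'/2}\,d\theta<\infty$ since $p'/2<1$. Also $\phi_0\in H^q$ for all $q<\infty$. Hölder's inequality with $1/p=1/p'+1/q$ then gives $\psi\in H^p$. Alternatively, one can check directly that $|\psi(e^{i\theta})|^p\lesssim |\theta|^{-p/2}\log^p(1/|\theta|)$ is integrable near $0$, and since $\psi\in N^+$, Smirnov's theorem places $\psi$ in $H^p$. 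Products of $N^+$ functions are in $N^+$, so $\phi\in N^+\cap\bigcap_{p<2}H^p$.

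Finally, Theorem~\ref{T:negative} applies to these data. It produces the Pythagorean pair $(b,a)$ with $b/a=\phi$ and an $f\in\cH(b)$ for which $\sum_{n\ge0}\overline{\hat\phi(n)}\hat f(n)$ diverges. This is exactly the assertion of the corollary.

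The only mildly delicate point is the choice of $\phi_0$. It must blow up along the radius, as Theorem~\ref{T:negative} requires, while staying in every $H^q$ with $q<\infty$, so that no integrability is lost beyond what the factor $(1-z)^{-1/2}$ already costs. A logarithm does both.
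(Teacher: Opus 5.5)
Your proposal is correct and follows the paper's route: apply Theorem~\ref{T:negative} with a logarithmic $\phi_0$. Yours is $1-\log(1-z)$ where the paper uses $-\log(1-z)$, which is an inessential difference. Your explicit zeros $t_n=1-3^{-n}$ and the H\"older argument for $\phi\in\cap_{p<2}H^p$ fill in details the paper leaves as ``clear''.
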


\begin{proof}
Let $(b,a)$ be the Pythagorean pair corresponding to
the function
$\phi(z):=-B(z)^2\log(1-z)/(1-z)^{1/2}$,
where $B$ is a Blaschke product whose zeros
$(t_n)_{n\ge1}$ lie in $(0,1)$ and
satisfy~\eqref{E:zerocond}.
By Theorem~\ref{T:negative}, applied with $\phi_0(z):=-\log(1-z)$,
there is an $f\in\cH(b)$ such that the series $\sum_{n\ge0}\overline{\hat{\phi}(n)}\hat{f}(n)$ diverges.
Also, we clearly have $\phi\in\cap_{p<2}H^p$. 
\end{proof}

The second application of Theorem~\ref{T:negative}
is an example showing that part~(ii) of Theorem~\ref{T:H2} may hold for certain
$\phi\notin H^2$ (even for $\phi\notin H^1$),
but that it may then be unstable
under multiplication by Blaschke products.

\begin{corollary}\label{C:H1}
There exist a Pythagorean pair $(b,a)$ 
and a Blaschke product~$B$ with the following properties:
\begin{itemize}
\item $\phi:=b/a\in N^+\setminus H^1$;
\item the formula \eqref{E:H2formula}  holds for all $f\in\cH(b)$;
\item the analogue of \eqref{E:H2formula} for $\cH(B^2b)$ fails  for at least one $f\in\cH(B^2b)$.
\end{itemize}
\end{corollary}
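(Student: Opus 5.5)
We take $\phi_0$ rational with a simple pole on $\TT$, and apply Theorem~\ref{T:negative}. Concretely, set $\phi_1(z):=1/(1-z)^{1/2}$ and let $(b,a)$ be the Pythagorean pair with $b/a=\phi_1(z)\phi_0(z)$ for a suitable $\phi_0$. We want $\phi_1\phi_0\notin H^1$ but with $\phi_0$ simple enough that the formula still holds on all of $\cH(b)$. The cleanest route is to make $b/a$ itself rational with only simple poles on $\TT$, and then invoke Theorem~\ref{T:rational}. So take $\phi:=1/(1-z)$. This is rational, real on the real axis, has a single simple pole at $1$, and lies in $N^+\setminus H^1$: indeed $\int|1-e^{i\theta}|^{-1}\,d\theta=\infty$, and $1/(1-z)\in H^p$ only for $p<1$. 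Let $(b,a)$ be the corresponding Pythagorean pair. By Theorem~\ref{T:rational}\,(ii) and (i), every $f\in\cH(b)$ satisfies \eqref{E:rationalcond}, and hence \eqref{E:H2formula} holds for all $f\in\cH(b)$. This gives the first two bullets.

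For the third bullet, choose a Blaschke product $B$ with zeros $t_n\in(0,1)$ satisfying \eqref{E:zerocond}; for instance $t_n:=1-3^{-n}$, for which both ratios in \eqref{E:zerocond} equal $1/3$. We then need to identify the Pythagorean pair for $B^2b$. Since $|B|=1$ on $\TT$, we have $|B^2b|^2+|a|^2=1$ a.e. on $\TT$. Also $a$ is still outer with $a(0)>0$, and $B^2b\in H^\infty$. Thus $(B^2b,a)$ is a Pythagorean pair, and its quotient is $B^2\phi$. We now write
\[
B^2\phi=B(z)^2\,\frac{\phi_0(z)}{(1-z)^{1/2}}, \qquad \phi_0(z):=(1-z)^{-1/2}.
\]
Here $\phi_0\in N^+$ (it even lies in $H^p$ for $p<2$), it is real on $(-1,1)$, and $\phi_0(r)\to\infty$ as $r\to1^-$. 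Theorem~\ref{T:negative} therefore yields $f\in\cH(B^2b)$ such that $\sum_{n\ge0}\overline{\widehat{B^2\phi}(n)}\hat{f}(n)$ diverges. The $m=0$ inner series in the analogue of \eqref{E:H2formula} is then undefined, so the formula fails for this $f$.

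The only delicate points are routine checks. First, $1/(1-z)\notin H^1$. Second, $(B^2b,a)$ really is the Pythagorean pair associated with $B^2\phi$; this follows from the uniqueness of the representation $\phi=b/a$ recalled in \S\ref{S:deBR}. Third, the hypotheses of Theorem~\ref{T:negative} hold for our choice of $\phi_0$. None of these checks should present a genuine obstacle.
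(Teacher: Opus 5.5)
Your proposal is correct and follows the paper's proof: take $\phi=1/(1-z)$, use Theorem~\ref{T:rational}\,(i) and (ii) to get the first two bullets, then observe that $(B^2b,a)$ is the Pythagorean pair for $B^2\phi$ and apply Theorem~\ref{T:negative} with $\phi_0(z)=(1-z)^{-1/2}$. Your explicit choice $t_n=1-3^{-n}$ and your checks of the hypotheses only add details that the paper leaves implicit.
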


\begin{proof}
Let $\phi(z):=1/(1-z)$ and let $(b,a)$ be the Pythagorean pair
such that $\phi=b/a$. 
Clearly $\phi\in N^+\setminus H^1$.
By Theorem~\ref{T:rational},
the formula \eqref{E:H2formula}
holds for~$\cH(b)$.

Let $B$ be an infinite Blaschke product whose zeros $(t_n)_{n\ge1}$
lie in $(0,1)$ and satisfy \eqref{E:zerocond}.
Then $(B^2b,a)$ is a Pythagorean pair and 
$B(z)^2b(z)/a(z)=B(z)^2/(1-z)$,
so, by Theorem~\ref{T:negative}
applied with $\phi_0(z)=(1-z)^{-1/2}$,
the analogue of \eqref{E:H2formula} for $\cH(B^2b)$ 
fails  for at least one $f\in\cH(B^2b)$.
\end{proof}

We now turn to
the proof of Theorem~\ref{T:negative}.
This is strongly influenced by the arguments in \cite{EFKMR16}. We need three lemmas,
the first of which is a special case of \cite[Lemma~3.3]{EFKMR16}.

\begin{lemma}\label{L:Blaschke}
Let $B$ be a Blaschke product whose zeros $(t_n)_{n\ge1}$
lie in $(0,1)$ and satisfy \eqref{E:zerocond}.
Then 
\[
\inf_{n\ge1}|B(t_n^2)|>0.
\]
\end{lemma}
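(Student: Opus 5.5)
We need to show $\inf_n |B(t_n^2)|>0$. The plan is to write
\[
|B(t_n^2)|=\prod_{k\ge1}\Bigl|\frac{t_k-t_n^2}{1-t_kt_n^2}\Bigr|
\]
and bound the factors from below, separating the terms with $k$ near $n$ from the rest. The key observation is that $t_n^2$ is genuinely separated from every zero in the pseudo-hyperbolic sense. Indeed, $1-t_n^2=(1-t_n)(1+t_n)$, which lies between $1-t_n$ and $2(1-t_n)$. Meanwhile, condition \eqref{E:zerocond} says that $\delta_k:=1-t_k$ satisfies $c\delta_k\le\delta_{k+1}\le\theta\delta_k$ with $0<c\le\theta<1/2$. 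So the numbers $\delta_k$ decrease geometrically with ratio at most $\theta<1/2$, and no $\delta_k$ can lie too close to the quantity $1-t_n^2$.

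We split into two cases according to whether $t_n$ is close to $1$.

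\emph{Case $1-t_n$ small.} First we check that $1-t_n^2$ is comparable to $\delta_n$ but not too close to it or to $\delta_{n-1}$. On one side, $\delta_{n-1}\ge\delta_n/\theta>2\delta_n$. On the other side, $1-t_n^2=\delta_n(2-\delta_n)$, which lies in $[\delta_n(2-\delta_n),2\delta_n)$ and is bounded away from both $\delta_n$ and $\delta_n/\theta$ in ratio, provided $\delta_n$ is small. Next we use the standard estimate for points $x,y\in(0,1)$:
\[
1-\Bigl|\frac{x-y}{1-xy}\Bigr|\le C\min\Bigl\{\frac{1-x}{1-y},\frac{1-y}{1-x}\Bigr\}
\]
when the ratio is away from $1$. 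More precisely,
\[
1-\Bigl|\frac{x-y}{1-xy}\Bigr|^2=\frac{(1-x^2)(1-y^2)}{(1-xy)^2},
\]
and $1-xy\ge\max(1-x,1-y)$. Hence each factor satisfies
\[
1-|\text{factor}_k|^2\le 4\,\frac{\min(\delta_k,\eta)}{\max(\delta_k,\eta)}, \qquad \eta:=1-t_n^2 .
\]
Since the $\delta_k$ are geometric with ratio at most $\theta$, the ratios $\min/\max$ over $k$ are bounded by a geometric series $\sum_j \theta^{j}$ times a constant, uniformly in $n$. Finitely many $k$ (those with ratio not small) are handled individually: for these the ratio $\delta_k/\eta$ stays in a compact set bounded away from $1$, so each factor is bounded below by a positive constant depending only on $c$ and $\theta$. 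For the remaining factors we use $\prod(1-u_k)\ge \exp(-C\sum u_k)$ when $u_k\le 1/2$.

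\emph{Case $1-t_n$ not small.} Only finitely many $n$ fall into this case. For each of them $B(t_n^2)\ne0$, because $t_n^2\ne t_k$: this holds for $k\ge n$ since $t_n^2<t_n\le t_k$, and for $k<n$ it follows from the separation just established, applied in a form valid for all $\delta_n$. Since these are finitely many nonzero values, their minimum is positive.

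The main obstacle is verifying uniformly that $\eta=1-t_n^2$ stays at a definite ratio from every $\delta_k$. It lies strictly between $\delta_n$ and $2\delta_n\le\ldots<\delta_{n-1}$, using $\theta<1/2$. The ratio to $\delta_n$ is $1+t_n\ge1+t_1>1$, and the ratio to $\delta_{n-1}$ is at most $2\theta<1$. Since these bounds hold for every $n$, the argument in fact needs no case split, and this is exactly where the hypothesis $\sup<1/2$ enters.
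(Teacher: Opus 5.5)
The paper gives no proof of this lemma; it simply quotes it as a special case of Lemma~3.3 of \cite{EFKMR16}. Your argument is a correct, self-contained proof of it, and you have found the right two ingredients.

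\emph{Separation.} Put $\eta=1-t_n^2=\delta_n(1+t_n)$. Then $\delta_k/\eta\le\theta^{k-n}/(1+t_1)$ for $k\ge n$, and $\eta/\delta_k\le 2\theta^{n-k}$ for $k<n$. So $t_n^2$ is pseudo-hyperbolically separated from every zero, uniformly in $n$.

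\emph{Summability.} Write $\rho(x,y)=|x-y|/(1-xy)$. The identity $1-\rho(x,y)^2=(1-x^2)(1-y^2)/(1-xy)^2$ gives
\[
\sum_k\bigl(1-\rho(t_k,t_n^2)^2\bigr)\le 4\,\frac{1+2\theta}{1-\theta}
\]
uniformly in $n$.

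Compared with the citation, your route is transparent. It also shows that the hypothesis $\inf_n(1-t_{n+1})/(1-t_n)>0$ in \eqref{E:zerocond} is never needed for this lemma: only $\sup<1/2$ (together with $t_1>0$) is used.

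The write-up could be tightened in four places:
\begin{enumerate}
\item The case split is superfluous, as you note yourself at the end.
\item The lower bound on individual factors should be stated explicitly rather than asserted. Writing $x=1-\alpha$ and $y=1-\beta$, one has $\rho(x,y)\ge|\alpha-\beta|/(\alpha+\beta)$. This yields $\rho(t_k,t_n^2)\ge m:=\min\{t_1/(2+t_1),\,(1-2\theta)/(1+2\theta)\}$ for all $k,n$. No compactness of the set of ratios is needed, only that the ratios stay away from $1$.
\item The constants therefore depend on $\theta$ and $t_1$, not on $c$.
\item Given a uniform lower bound $m$ on every factor, you can skip the ``finitely many $k$'' step altogether. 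Since $-\log(1-u)/u$ is increasing, $-\log(1-u)\le u\log(1/m^2)/(1-m^2)$ for $0\le u\le 1-m^2$. Hence $|B(t_n^2)|^2\ge\exp\bigl(-\tfrac{\log(1/m^2)}{1-m^2}\cdot 4\tfrac{1+2\theta}{1-\theta}\bigr)$ directly.
\end{enumerate}
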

 
The second lemma is a standard property of coanalytic
Toeplitz operators.
We write $k_w(z):=1/(1-\overline{w}z)$,
the reproducing kernel of $H^2$ at $w\in\DD$.

\begin{lemma}\label{L:RK}
Let $\psi\in H^\infty$ and let $w\in\DD$.
Then
\[
T_{\overline{\psi}}k_w=\overline{\psi(w)}k_w.
\]
\end{lemma}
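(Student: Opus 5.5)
The statement to prove is Lemma~\ref{L:RK}: $T_{\overline{\psi}}k_w=\overline{\psi(w)}k_w$ for $\psi\in H^\infty$ and $w\in\DD$. The plan is to test both sides against an arbitrary $h\in H^2$ and use the reproducing property of $k_w$ twice. Since $T_{\overline{\psi}}$ is the adjoint of the analytic Toeplitz operator $T_\psi$, which is multiplication by $\psi$ on $H^2$, we get for every $h\in H^2$
\[
\langle h, T_{\overline{\psi}}k_w\rangle_{H^2}
=\langle \psi h, k_w\rangle_{H^2}
=\psi(w)h(w)
=\psi(w)\langle h,k_w\rangle_{H^2}
=\langle h,\overline{\psi(w)}k_w\rangle_{H^2}.
\]
Since $h$ is arbitrary, the two vectors $T_{\overline{\psi}}k_w$ and $\overline{\psi(w)}k_w$ in $H^2$ coincide.

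The only point needing justification is that $(T_\psi)^*=T_{\overline{\psi}}$. Write $T_{\overline{\psi}}u=P_+(\overline{\psi}u)$. For $h,u\in H^2$ we have
\[
\langle h,P_+(\overline{\psi}u)\rangle=\langle h,\overline{\psi}u\rangle_{L^2}=\frac{1}{2\pi}\int_\TT h\psi\overline{u}\,d\theta=\langle \psi h,u\rangle.
\]
The first equality holds because $P_+$ is the orthogonal projection onto $H^2$ and $h\in H^2$. The last equality holds because $\psi h\in H^2$, as $\psi\in H^\infty$.

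No step is a real obstacle; the computation is routine once the adjoint identity is in place. As a sanity check, the result can also be verified coefficientwise. Expand $\overline{\psi}k_w$ on $\TT$ using $k_w=\sum_j\overline{w}^jz^j$. The $m$-th coefficient of its analytic part is
\[
\sum_{n\ge0}\overline{\hat{\psi}(n)}\,\overline{w}^{m+n}=\overline{w}^m\,\overline{\psi(w)},
\]
which agrees with the $m$-th coefficient of $\overline{\psi(w)}k_w$. The series converges absolutely because $|w|<1$ and the coefficients $\hat{\psi}(n)$ are bounded.
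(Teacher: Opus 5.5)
Your proof is correct and is essentially the paper's argument: pair $T_{\overline{\psi}}k_w$ against an arbitrary $h\in H^2$, move $T_{\overline{\psi}}$ across as multiplication by $\psi$, and use the reproducing property of $k_w$ twice. Your explicit check that $(T_\psi)^*=T_{\overline{\psi}}$ and the coefficientwise verification are correct additions that the paper leaves implicit.
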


\begin{proof} 
For all $f\in H^2$, we have
\[
\langle f,T_{\overline{\psi}}k_w\rangle
=\langle \psi f,k_w\rangle
=\psi(w)f(w)
=\psi(w)\langle f,k_w\rangle
=\langle f,\overline{\psi(w)}k_w\rangle.\qedhere
\]
\end{proof}

The third lemma is a  calculation.

\begin{lemma}\label{L:calc}
Let $(b,a)$ be a Pythagorean pair, let $f\in H^2$, and let $r\in(0,1)$.
Then $T_{\overline{b}}(f_r)=T_{\overline{a}}(g)$, where $g\in H^2$
and
\[
g(0)=\sum_{n\ge0}\overline{\hat{\phi}(n)}\hat{f}(n)r^n.
\]
\end{lemma}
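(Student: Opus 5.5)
The plan is to reduce to Theorem~\ref{T:CGR}, since $f_r$ (the $r$-dilation $f_r(z)=f(rz)$) is holomorphic on the neighbourhood $\{|z|<1/r\}$ of $\overline{\DD}$. By Theorem~\ref{T:CGR}, $f_r\in\cH(b)$. By Theorem~\ref{T:TbTa}, there is then a unique $g\in H^2$ with $T_{\overline{b}}(f_r)=T_{\overline{a}}(g)$. It remains only to identify $g(0)=\hat g(0)$. I would do this by computing $g$ explicitly rather than through norms.

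The candidate is
\[
g(z):=\sum_{m\ge0}\Bigl(\sum_{n\ge0}\overline{\hat{\phi}(n)}\hat{f}(m+n)r^{m+n}\Bigr)z^m .
\]
Here $\phi\in N^+$, and a function in $N^+$ satisfies $\log|\phi(z)|\le O(1/(1-|z|))$. Hence its Taylor coefficients satisfy $|\hat\phi(n)|=e^{o(n)}$, that is, they grow subexponentially. The coefficients $\hat f(k)r^k$ decay like $r^k$, so every inner series converges absolutely, and the $m$-th coefficient of $g$ is $O(\rho^m)$ for any $\rho\in(r,1)$. Thus $g\in H^2$, and indeed $g$ is holomorphic on a neighbourhood of $\overline{\DD}$. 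The desired formula for $g(0)$ is then just the case $m=0$.

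To verify $T_{\overline b}(f_r)=T_{\overline a}(g)$, I would follow the argument in the proof of Theorem~\ref{T:Hp}(ii), which becomes easy here. Define the distribution $u:=\overline{\phi}f_r$ via Fourier coefficients, $\hat u(m)=\sum_{n}\overline{\hat\phi(n)}\hat f(m+n)r^{m+n}$. These coefficients have subexponential growth as $m\to-\infty$ (which is acceptable, since only finitely many nonzero terms are paired with a trigonometric polynomial), and they decay exponentially as $m\to+\infty$. Alternatively, and more cleanly, I would avoid distributions altogether. Work with the dilation $a_s$ for $s<1$. Then $a_s\phi=:b^{(s)}\in N^+$, and $a_s\phi\to b$ in a suitable sense. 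Compute the nonnegative coefficients of $\overline{b}f_r-\overline{a}g$ directly:
\[
\sum_{j}\overline{\hat b(j)}\hat f(m+j)r^{m+j}-\sum_{j}\overline{\hat a(j)}\hat g(m+j),
\]
where $\hat b=\hat a*\hat\phi$ as a Cauchy product. Substituting gives a double series $\sum_{j,n}\overline{\hat a(j)\hat\phi(n)}\hat f(m+j+n)r^{m+j+n}$ on both sides. Because $\hat a$ is bounded, $\hat\phi$ grows subexponentially, and $r^{k}$ decays exponentially, this double series converges absolutely. Fubini therefore shows that the two expressions agree. Hence $P_+(\overline b f_r-\overline a g)=0$.

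The main obstacle I expect is the coefficient growth claim $|\hat\phi(n)|=e^{o(n)}$ for $\phi\in N^+$, which is what makes all these rearrangements absolutely convergent. It follows from the fact that $\phi=\psi/\chi$ with $\chi$ outer, so that $\log|\phi(z)|\le C/(1-|z|)$. Applying Cauchy estimates on circles of radius $1-1/\sqrt n$ then gives $|\hat\phi(n)|\le e^{C\sqrt n}(1-1/\sqrt n)^{-n}=e^{O(\sqrt n)}$.
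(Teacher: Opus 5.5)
Your proof is correct, and your $g$ is exactly the paper's: the paper defines $g:=(P_+(\overline{\phi_r}f))_r$, whose $m$-th Taylor coefficient is $r^m\sum_{n\ge0}\overline{\hat{\phi}(n)}r^n\hat{f}(m+n)$. The two routes differ in how $T_{\overline{b}}(f_r)=T_{\overline{a}}(g)$ is checked.

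The paper stays at the operator level. The identity $P_+(\overline{u}v_r)=(P_+(\overline{u_r}v))_r$ (for $u\in H^\infty$, $v\in H^2$) moves the dilation from $f$ onto the symbols. After that, $\phi_r$ is bounded on $\TT$, and the claim reduces to $P_+(\overline{a_r}P_+(\overline{\phi_r}f))=P_+(\overline{b_r}f)$. That is, it needs only $a_r\phi_r=b_r$ and the multiplicativity of coanalytic Toeplitz operators with bounded symbols, so no convergence issues arise.

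You instead write the non-negative Fourier coefficients of $\overline{b}f_r$ and of $\overline{a}g$ as two iterated sums of the single double series $\sum_{k,n}\overline{\hat{a}(k)\hat{\phi}(n)}\hat{f}(m+k+n)r^{m+k+n}$, and you interchange the sums by absolute convergence. Your version is more elementary and fully explicit; it is the mechanism the paper only sketches in the proof of Theorem~\ref{T:Hp}. The paper's version is shorter.

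Several parts of your write-up can be dropped.
\begin{itemize}
\item The Smirnov-class bound $|\hat{\phi}(n)|=e^{O(\sqrt{n})}$ is not needed. For every $\phi$ holomorphic on $\DD$ we have $\sum_n|\hat{\phi}(n)|r^n<\infty$. Together with $|\hat{a}(k)|\le1$ and $|\hat{f}(j)|\le\|f\|_{H^2}$, this dominates your double series by $r^m\|f\|_{H^2}\sum_k r^k\sum_n|\hat{\phi}(n)|r^n$, and it gives $|\hat{g}(m)|=O(r^m)$ directly.
\item The appeal to Theorem~\ref{T:CGR} is redundant. Your direct verification, combined with Theorem~\ref{T:TbTa}, already gives $f_r\in\cH(b)$.
\item The dilations $a_s$ play no role in the argument.
\item The distributional aside is inaccurate. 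When $f=1$, the coefficient of $\overline{\phi}f_r$ at index $-M$ is $\overline{\hat{\phi}(M)}$, which need not be polynomially bounded for $\phi\in N^+$. So this object need not be a distribution in the paper's sense. Since you abandon that route, this does no harm.
\end{itemize}
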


\begin{proof}
We shall  use the following simple observation
about $r$-dilations: if $u\in H^\infty$ and $v\in H^2$, and $P_+:L^2(\TT)\to H^2$
is  the canonical projection, then
\begin{equation}\label{E:obs}
P_+(\overline{u}v_r)=(P_+(\overline{u_r}v))_r \quad(0<r<1).
\end{equation}

Clearly we have $\overline{\phi_r}f\in L^2(\TT)$.
Define $g:=(P_+(\overline{\phi_r}f))_r$. Then $g\in H^2$.
Also, using the observation \eqref{E:obs} above,
we have
\begin{align*}
T_{\overline{a}}g
&=P_+\bigl(\overline{a}(P_+(\overline{\phi_r}f))_r\bigr)
=\bigl(P_+\bigl(\overline{a_r}(P_+(\overline{\phi_r}f))\bigr)\bigr)_r
=\bigl(P_+\bigl(\overline{a_r}\overline{\phi_r}f)\bigr)_r\\
&=\bigl(P_+\bigl(\overline{b_r}f)\bigr)_r
=P_+(\overline{b}f_r)=T_{\overline{b}}(f_r).
\end{align*}
Finally, we have
\[
g(0)=(P_+(\overline{\phi_r}f))_r(0)=\hat{\overline{\phi_r}f}(0)=
\sum_{n\ge0}\overline{\hat{\phi}(n)r^n}\hat{f}(n).\qedhere
\]
\end{proof}

\begin{proof}[Proof of Theorem~\ref{T:negative}]
Since $\phi_0(r)\to\infty$ as $r\to1^-$,
there exists $n_0\ge1$ such that
$\phi_0(r)\ge0$ for all $r\ge t_{n_0}^2$.
Also $\phi_0(t_n^2)\to\infty$ as $n\to\infty$,
so there exists a non-negative sequence 
$(c_n)_{n\ge n_0}$ such that
\begin{equation}\label{E:infcond}
\sum_{n\ge n_0} c_n<\infty
\quad\text{and}\quad
\sup_{n\ge n_0}c_n\phi_0(t_n^2)=\infty.
\end{equation}
Set
\[
f:=\sum_{n\ge n_0}c_n(1-t_n^2)^{1/2}k_{t_n}.
\]
We claim that $f$ is well-defined  and that it belongs to $\cH(b)$,
but that the series $\sum_{m\ge0}\overline{\hat{\phi}(m)}\hat{f}(m)$ diverges.

First we show that $f\in\cH(b)$.
By Lemma~\ref{L:RK},
 we have $T_{\overline{b}}k_{w}=\overline{b(w)}k_w$
 for all $w\in \DD$. Also,
for all $n$, we have $B(t_n)=0$, and so
\[
b(t_n)=a(t_n)\phi(t_n)=a(t_n)B(t_n)^2\phi_0(t_n)/(1-t_n)^{1/2}=0.
\]
Therefore $T_{\overline{b}}k_{t_n}=0$.
Applying \eqref{E:TbTa} with $f=k_{t_n}$ and $g=0$, we deduce that
\[
\|k_{t_n}\|_{\cH(b)}^2=\|k_{t_n}\|_{H^2}^2
=\frac{1}{1-t_n^2}.
\]
Thus
\[
\sum_{n\ge n_0}c_n(1-t_n^2)^{1/2}\|k_{t_n}\|_{\cH(b)}
=\sum_{n\ge n_0}c_n<\infty.
\]
Therefore the series defining $f$ converges absolutely in $\cH(b)$.

Now we show that $\sum_{m\ge0}\overline{\hat{\phi}(m)}\hat{f}(m)$ diverges.
Let $r\in(0,1)$. Then 
\[
f_r=\sum_{n\ge n_0}c_n(1-t_n^2)^{1/2}k_{rt_n},
\]
where the series converges absolutely in the norm of $H^2$.
Using Lemma~\ref{L:RK} once again, we have
\begin{align*}
T_{\overline{b}}f_r
&=\sum_{n\ge n_0}c_n(1-t_n^2)^{1/2}T_{\overline{b}}k_{rt_n}\\
&=\sum_{n\ge n_0}c_n(1-t_n^2)^{1/2}\overline{b(rt_n)}k_{rt_n}\\
&=\sum_{n\ge n_0}c_n(1-t_n^2)^{1/2}\overline{\phi(rt_n)a(rt_n)}k_{rt_n}\\
&=\sum_{n\ge n_0}c_n(1-t_n^2)^{1/2}\overline{\phi(rt_n)}T_{\overline{a}}k_{rt_n}\\
&=T_{\overline{a}}g,
\end{align*}
where
\[
g=\sum_{n\ge n_0}c_n(1-t_n^2)^{1/2}\overline{\phi(rt_n)}k_{rt_n},
\]
the series converging in $H^2$. 
By Theorem~\ref{T:TbTa}, $g$ is the unique function in~$H^2$
such that $T_{\overline{b}}(f_r)=T_{\overline{a}}g$,
and by Lemma~\ref{L:calc}
\[
g(0)=\sum_{m\ge0}\overline{\hat{\phi}(m)}\hat{f}(m)r^m.
\]
We deduce that
\begin{align*}
\sum_{m\ge0}\overline{\hat{\phi}(m)}\hat{f}(m)r^m
&=\sum_{n\ge n_0}c_n(1-t_n^2)^{1/2}\overline{\phi(rt_n)}\\
&=\sum_{n\ge n_0}c_n(1-t_n^2)^{1/2}B(rt_n)^2\phi_0(rt_n)(1-rt_n)^{-1/2}.
\end{align*}
If, further, $r\ge t_{n_0}$, then all the terms in this last series are
positive, and so, for each $n\ge n_0$,
\[
\sum_{m\ge0}\overline{\hat{\phi}(m)}\hat{f}(m)r^m
\ge c_n(1-t_n^2)^{1/2}B(rt_n)^2\phi_0(rt_n)(1-rt_n)^{-1/2}.
\]
In particular, taking $r=t_n$, we obtain
\[
\sum_{m\ge0}\overline{\hat{\phi}(m)}\hat{f}(m)(t_n)^m
\ge c_nB(t_n^2)^2\phi_0(t_n^2)
\quad(n\ge n_0).
\]
By Lemma~\ref{L:Blaschke},   $\inf_{n\ge n_0}B(t_n)^2>0$,
and from \eqref{E:infcond} we have $\sup_{n\ge n_0}c_n\phi_0(t_n^2)=\infty$.
Hence
\[
\sup_{n\ge n_0}\sum_{m\ge0}\overline{\hat{\phi}(m)}\hat{f}(m)(t_n)^m=\infty.
\]
By Abel's theorem, it follows
that $\sum_{m\ge0}\overline{\hat{\phi}(m)}\hat{f}(m)$
diverges, as claimed.
\end{proof}

%%%%%%%%%%%%%%%%%%%%%%%%%%%%%%%

%%%%%%%%%%%%%%%%%%%%%%%%%%%%%%%

\section{A limit formula}\label{S:Tapprox}

The main result of this section is a limit form of the formula \eqref{E:formula}.
Though it is more complicated than \eqref{E:formula}, 
it has the virtue that it holds for all Pythagor\-ean pairs
$(b,a)$ and all $f\in\cH(b)$.

\begin{theorem}\label{T:Tapprox}
Let $(b,a)$ be a Pythagorean pair. 
For  $\epsilon>0$, let $a_\epsilon$ be the outer function
such that $|a_\epsilon|=\max\{|a|,\epsilon\}$
a.e.\ on $\TT$ and $a_\epsilon(0)>0$. Set $\phi_\epsilon:=b/a_\epsilon$. 
\begin{enumerate}[\normalfont(i)]
\item If $f\in \cH(b)$, then
\[
\|f\|_{\cH(b)}^2=\sum_{m\ge0}|\hat{f}(m)|^2+
\lim_{\epsilon\to0^+}\sum_{m\ge0}\Bigl|\sum_{n\ge0}\overline{\hat{\phi}_\epsilon(n)}\hat{f}(m+n)\Bigr|^2.
\]
\item If $f\in H^2$ but $f\notin \cH(b)$, then
\[
\lim_{\epsilon\to0^+}\sum_{m\ge0}\Bigl|\sum_{n\ge0}\overline{\hat{\phi}_\epsilon(n)}\hat{f}(m+n)\Bigr|^2=\infty.
\]
\end{enumerate}
\end{theorem}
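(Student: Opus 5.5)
Since $|a_\epsilon|\ge\epsilon$ on $\TT$, the function $\phi_\epsilon=b/a_\epsilon$ is bounded, so $\phi_\epsilon\in H^\infty\subset H^2$. The first step is to check that $(b_\epsilon,a_\epsilon)$ is a Pythagorean pair for a suitable $b_\epsilon$ with $b_\epsilon/a_\epsilon=\phi_\epsilon$. The natural choice is
\[
b_\epsilon:=b/\sqrt{|b|^2+|a_\epsilon|^2}\,\cdot\text{(outer normalisation)}.
\]
More precisely, let $c_\epsilon$ be the outer function with $|c_\epsilon|^2=|b|^2+|a_\epsilon|^2$ on $\TT$, and set $b'_\epsilon:=b/c_\epsilon$ and $a'_\epsilon:=a_\epsilon/c_\epsilon$. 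Then $(b'_\epsilon,a'_\epsilon)$ is a Pythagorean pair with quotient $\phi_\epsilon$. Applying Theorem~\ref{T:H2} to it, for every $f\in H^2$ we get $f\in\cH(b'_\epsilon)$ if and only if $S_\epsilon(f):=\sum_m|\sum_n\overline{\hat\phi_\epsilon(n)}\hat f(m+n)|^2<\infty$, and in that case $\|f\|^2_{\cH(b'_\epsilon)}=\|f\|_{H^2}^2+S_\epsilon(f)$. Adopting the convention $S_\epsilon(f)=\infty$ for $f\notin\cH(b'_\epsilon)$, the theorem reduces to showing
\[
\lim_{\epsilon\to0^+}\|f\|_{\cH(b'_\epsilon)}=\|f\|_{\cH(b)}
\]
for all $f\in H^2$, with both sides allowed to be $+\infty$.

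The key steps for this are as follows. First, compare norms using Theorem~\ref{T:TbTa} directly in terms of $\phi$. For $f\in H^2$, $f\in\cH(b)$ with $T_{\bar b}f=T_{\bar a}g$ means $\bar bf-\bar ag\in\overline{H^2_0}$. Passing from $(b,a)$ to $(b'_\epsilon,a'_\epsilon)$ amounts to dividing by $\bar c_\epsilon$ and replacing $a$ by $a_\epsilon$. I would instead characterise the norm variationally:
\[
\|f\|^2_{\cH(b)}=\|f\|^2+\inf\{\|g\|^2: g\in H^2,\ P_+(\bar\phi f-g)\text{ "=0" in the Smirnov sense}\}.
\]
A cleaner route is to use the complementary-space description $\cH(b)=\cH(T_b)$, so that $\|f\|^2_{\cH(b)}=\sup_{h\in H^2}\bigl(2\operatorname{Re}\langle f,h\rangle-\langle (I-T_bT_{\bar b})h,h\rangle\bigr)$, or equivalently $\|f\|^2_{\cH(b)}=\lim_{t\to1^-}\langle (I-t^2T_bT_{\bar b})^{-1}f,f\rangle$.

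Second, observe monotonicity: $|b'_\epsilon|^2=|b|^2/(|b|^2+|a_\epsilon|^2)$ increases to $|b|^2$ as $\epsilon\downarrow0$, since $|a_\epsilon|\downarrow|a|$. The aim is to show $\cH(b'_\epsilon)$ increases contractively to $\cH(b)$, i.e.\ that $\|f\|_{\cH(b'_\epsilon)}$ is nonincreasing in $\epsilon\downarrow 0$ and dominated by $\|f\|_{\cH(b)}$. A direct route is via Theorem~\ref{T:TbTa}: if $T_{\bar b}f=T_{\bar a}g$, write $\bar b f-\bar a g=\bar h$ and multiply by $\overline{a_\epsilon/(a c_\epsilon)}$. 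The function $a_\epsilon/a$ is outer, but it is unbounded and not bounded below, so this multiplication is not obviously legitimate. I would instead prove domination using the lifting/range description $\cH(b)=\{f:\ (f,\cdot)\}$ as the range of the operator $(T_{1-|b|^2})^{1/2}$-type model. Concretely, $\cH(\bar b)$ coincides with $\cH(T_{\bar b})$, and the de Branges–Rovnyak norm is the range norm of $(I-T_bT_{\bar b})^{1/2}$. I would show
\[
I-T_{b'_\epsilon}T_{\overline{b'_\epsilon}}\ \le\ I-T_bT_{\bar b}\quad\text{(fails in general)},
\]
so this operator inequality must be replaced by a strong-limit argument. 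Namely, $T_{b'_\epsilon}T_{\overline{b'_\epsilon}}\to T_bT_{\bar b}$ strongly, because $b'_\epsilon\to b$ boundedly a.e. A strong limit of positive contractions $A_\epsilon\to A$ gives $\liminf_\epsilon\|f\|_{\mathcal M(A_\epsilon^{1/2})}\ge\|f\|_{\mathcal M(A^{1/2})}$, by lower semicontinuity of the range norm via the sup formula above. This yields part (ii) and the inequality "$\ge$" in part (i).

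Third, the reverse inequality $\limsup\le$ for $f\in\cH(b)$. This is the main obstacle. By density of polynomials in $\cH(b)$ and the fact that Theorem~\ref{T:CGR} holds for each pair, it suffices to get a uniform bound $\|f\|_{\cH(b'_\epsilon)}\le\|f\|_{\cH(b)}$. I would try to prove this via the explicit $g$. Given $T_{\bar b}f=T_{\bar a}g$, I would propose $g_\epsilon:=P_+(\bar\phi_\epsilon f)$ and show $\|g_\epsilon\|\le\|g\|$ by writing $\bar\phi_\epsilon f=\overline{(a/a_\epsilon)}(g+\bar{h}/\bar a)$. Here $|a/a_\epsilon|\le1$ and $a/a_\epsilon\in H^\infty$, so $P_+(\overline{a/a_\epsilon}\,g)=T_{\overline{a/a_\epsilon}}g$ has norm at most $\|g\|$. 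The term $\overline{a/a_\epsilon}\cdot\overline{h/a}=\overline{h/a_\epsilon}$ lies in $\overline{H^1_0}$ by the Smirnov argument used in Theorem~\ref{T:H2}, and hence is killed by $P_+$. Therefore $S_\epsilon(f)=\|T_{\overline{a/a_\epsilon}}g\|^2\le\|g\|^2$. Moreover $a/a_\epsilon\to1$ boundedly a.e., so $T_{\overline{a/a_\epsilon}}g\to g$, which gives the limit in (i) directly.
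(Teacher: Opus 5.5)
Your proposal is correct. Your third step is the paper's proof of part (i). The identity $T_{\overline{\phi}_\epsilon}f=T_{\overline{a/a_\epsilon}}\,g$, which you derive on boundary values from $\overline{b}f-\overline{a}g=\overline{h}$, is the paper's $T_{\overline{\psi}_j}g=T_{\overline{\phi}_{\epsilon_j}}f$ with $\psi_j=a/a_{\epsilon_j}$.

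Two small points in that step:
\begin{enumerate}
\item $h/a_\epsilon\in H^2_0$ directly, because $1/a_\epsilon\in H^\infty$, so no Smirnov argument is needed.
\item The claim ``$a/a_\epsilon\to1$ boundedly a.e.'' is more than you need and is not immediate. What you have is $\|a/a_\epsilon\|_{H^\infty}\le1$ and $a/a_\epsilon\to1$ pointwise on $\DD$. Indeed $a/a_\epsilon\to1$ in $L^2(\TT)$, since $\|a/a_\epsilon-1\|_{L^2}^2\le 2-2(a/a_\epsilon)(0)\to0$. Either fact gives $T_{\overline{a/a_\epsilon}}g\to g$, via Lemma~\ref{L:Tapprox} or via dominated convergence along subsequences. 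The same remark applies to $b/c_\epsilon\to b$ in your second step.
\end{enumerate}
Since this step already gives the full limit in (i), the ``$\ge$'' from your second step and the density reduction are superfluous. The abandoned monotonicity and operator-inequality discussion should be cut.

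For part (ii) your route differs from the paper's. The paper argues by weak compactness. If $\liminf_\epsilon\|T_{\overline{\phi}_\epsilon}f\|_{H^2}<\infty$, some subsequence $T_{\overline{\phi}_{\epsilon_j}}f$ converges weakly to a $g\in H^2$. The identity $T_{\overline{a}}T_{\overline{\phi}_{\epsilon_j}}f=T_{\overline{\psi}_j}T_{\overline{b}}f$ then converges in norm to $T_{\overline{b}}f$. Hence $T_{\overline{b}}f=T_{\overline{a}}g$, and $f\in\cH(b)$ by Theorem~\ref{T:TbTa}.

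You instead do three things:
\begin{enumerate}
\item You realize $\|f\|_{H^2}^2+S_\epsilon(f)$ as a genuine de Branges--Rovnyak norm $\|f\|^2_{\cH(b/c_\epsilon)}$, using the auxiliary pair $(b/c_\epsilon,a_\epsilon/c_\epsilon)$ and Theorem~\ref{T:H2}.
\item You use the strong convergence $T_{b/c_\epsilon}T_{\overline{b/c_\epsilon}}\to T_bT_{\overline{b}}$.
\item You invoke lower semicontinuity of the range norm of $\mathcal{M}((I-T_bT_{\overline b})^{1/2})$ through the formula $\|f\|^2_{\cH(b)}=\sup_h\bigl(2\operatorname{Re}\langle f,h\rangle-\langle (I-T_bT_{\overline b})h,h\rangle\bigr)$. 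This formula correctly takes the value $+\infty$ when $f\notin\cH(b)$.
\end{enumerate}

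This is valid, and it buys a clean conceptual reading. Parts (i) and (ii) together say that $\|f\|_{\cH(b/c_\epsilon)}\to\|f\|_{\cH(b)}$ for every $f\in H^2$, with the value $\infty$ allowed. The spaces $\cH(b/c_\epsilon)$ all equal $H^2$ as sets, because $\|b/c_\epsilon\|_{H^\infty}<1$. The cost is extra machinery: the range-space model of $\cH(b)$ (the paper defines $\cH(b)$ via its kernel), the Douglas-type variational formula, the auxiliary outer function $c_\epsilon$, and Theorem~\ref{T:H2}. The paper's argument stays entirely within Theorem~\ref{T:TbTa} and Lemma~\ref{L:Tapprox}. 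It is really the same lower-semicontinuity phenomenon, expressed at the level of the solution $g$ of $T_{\overline b}f=T_{\overline a}g$ rather than at the level of the operators $I-T_bT_{\overline b}$.
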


For the proof, we use a lemma on coanalytic 
Toeplitz operators.

\begin{lemma}\label{L:Tapprox}
Let $(\psi_\ell)$ be a sequence in $H^\infty$.
The following statements are equivalent:
\begin{enumerate}[\normalfont(i)]
\item $\|T_{\overline{\psi}_\ell}h\|_{H^2}\to0$  for all $h\in H^2$;
\item $\sup_\ell\|\psi_\ell\|_{H^\infty}<\infty$ and $\psi_\ell\to0$ pointwise on $\DD$.
\end{enumerate}
\end{lemma}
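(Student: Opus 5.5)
We prove both directions using the reproducing kernels $k_w$ of $H^2$ and Lemma~\ref{L:RK}, which gives $T_{\overline{\psi}_\ell}k_w=\overline{\psi_\ell(w)}k_w$.

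\emph{(i)$\Rightarrow$(ii).} Suppose $T_{\overline{\psi}_\ell}h\to0$ in $H^2$ for every $h\in H^2$. Then $\sup_\ell\|T_{\overline{\psi}_\ell}h\|_{H^2}<\infty$ for each $h$. By the uniform boundedness principle, $\sup_\ell\|T_{\overline{\psi}_\ell}\|<\infty$. Since $\|T_{\overline{\psi}}\|=\|\psi\|_{H^\infty}$ for a Toeplitz operator with symbol in $L^\infty$, this gives $\sup_\ell\|\psi_\ell\|_{H^\infty}<\infty$. For pointwise convergence, fix $w\in\DD$ and take $h=k_w$. By Lemma~\ref{L:RK},
\[
|\psi_\ell(w)|\,\|k_w\|_{H^2}=\|T_{\overline{\psi}_\ell}k_w\|_{H^2}\to0 .
\]
As $\|k_w\|_{H^2}\ne0$, we get $\psi_\ell(w)\to0$.

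\emph{(ii)$\Rightarrow$(i).} Suppose $M:=\sup_\ell\|\psi_\ell\|_{H^\infty}<\infty$ and $\psi_\ell\to0$ pointwise on $\DD$. Then $\|T_{\overline{\psi}_\ell}\|\le M$ for all $\ell$. By Lemma~\ref{L:RK}, $T_{\overline{\psi}_\ell}k_w=\overline{\psi_\ell(w)}k_w\to0$ in norm for each $w\in\DD$. Linearity extends this to every finite linear combination of kernels. Such combinations are dense in $H^2$: any $h\in H^2$ orthogonal to all $k_w$ satisfies $h(w)=\langle h,k_w\rangle=0$ for all $w$. A standard $\epsilon/3$ argument, using the uniform bound $M$, then gives $T_{\overline{\psi}_\ell}h\to0$ for all $h\in H^2$.

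Alternatively, one can check convergence on polynomials $z^k$ directly, since $\hat{\psi}_\ell(j)\to0$ for each $j$ by Cauchy estimates. The kernel route is cleaner.

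The main obstacle is the norm identity $\|T_{\overline{\psi}}\|=\|\psi\|_\infty$ used in (i)$\Rightarrow$(ii). If we prefer to avoid it, we can use the lower bound $\|T_{\overline{\psi}}\|\ge\sup_{w\in\DD}|\psi(w)|$. This follows from Lemma~\ref{L:RK} applied to the normalized kernels $k_w/\|k_w\|_{H^2}$, and it equals $\|\psi\|_{H^\infty}$ by the maximum principle.
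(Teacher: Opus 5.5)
Your proof is correct, but it takes a different route from the paper's. The paper tests the operators on the monomials $z^k$, using $\|T_{\overline{\psi}_\ell}z^k\|_{H^2}^2=\sum_{j=0}^k|\hat{\psi}_\ell(j)|^2$. In both directions it converts between convergence of Taylor coefficients and pointwise convergence by a normal-family (Montel) argument. It also quotes the Brown--Halmos theorem for $\|T_{\overline{\psi}_\ell}\|=\|\psi_\ell\|_{H^\infty}$.

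You instead test on the reproducing kernels. By Lemma~\ref{L:RK} these are common eigenvectors of all the $T_{\overline{\psi}_\ell}$, with eigenvalues $\overline{\psi_\ell(w)}$. So pointwise convergence of $(\psi_\ell)$ is exactly strong convergence on the kernels, and no normal-family argument is needed in either direction. Your fallback bound $\|T_{\overline{\psi}}\|\ge\sup_{w\in\DD}|\psi(w)|$, obtained from normalized kernels, also removes the need for Brown--Halmos. The other direction only uses the trivial bound $\|T_{\overline{\psi}}\|\le\|\psi\|_{H^\infty}$.

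The result is a shorter, self-contained argument that uses only Lemma~\ref{L:RK}, uniform boundedness and density of the kernels. The paper's version has the modest advantage of staying within the coefficient calculus that is the theme of the article.

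One minor point in your aside: bounded pointwise convergence gives $\hat{\psi}_\ell(j)\to0$ via the Cauchy integral formula with dominated convergence, or via Montel, not from the Cauchy estimates alone. The aside is not used, so this does not affect your proof.
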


\begin{proof}
(i)$\Rightarrow$(ii): Suppose that $\|T_{\overline{\psi}_\ell}h\|_{H^2}\to0$
for each $h\in H^2$. 

We first show that $\sup_\ell\|\psi_\ell\|_{H^\infty}<\infty$.
As $\sup_\ell\|T_{\overline{\psi}_\ell}h\|_{H^2}<\infty$ for each $h\in H^2$,
the Banach--Steinhaus theorem implies that
$\sup_\ell\|T_{\overline{\psi}_\ell}\|<\infty$.
According to the Brown--Halmos theorem \cite[p.95]{BH63}, 
we have $\|T_{\overline{\psi}_\ell}\|=\|\psi_\ell\|_{H^\infty}$ for each~$\ell$. 
Hence $\sup_\ell\|\psi_\ell\|_{H^\infty}<\infty$, as claimed.

Now we show that $\psi_\ell\to0$ pointwise in $\DD$.
Since $\sup_\ell\|\psi_\ell\|_{H^\infty}<\infty$,
the sequence $(\psi_\ell)$ forms a normal family on $\DD$.
Therefore it suffices to prove that $\hat{\psi}_\ell(k)\to0$ as $\ell\to\infty$ for each $k\ge0$.
Let $k\ge0$ and set $f_k(z):=z^k$.
Then, for each $\ell$, we have
\[
\|T_{\overline{\psi}_\ell}f_k\|_{H^2}^2
=\sum_{m\ge0}\Bigl|\sum_{n\ge0}\overline{\hat{\psi}_\ell(n)}\hat {f}_k(m+n)\Bigr|^2
=\sum_{j=0}^k|\hat{\psi}_\ell(j)|^2
\ge |\hat{\psi}_\ell(k)|^2.
\]
Since $\|T_{\overline{\psi}_\ell}f_k\|_{H^2}\to0$ as $\ell\to\infty$,
we get $\hat{\psi}_\ell(k)\to0$ as $\ell\to\infty$,
as desired.

(ii)$\Rightarrow$(i): Suppose that $\sup_\ell\|\psi_\ell\|_{H^\infty}<\infty$ 
and $\psi_\ell\to0$ pointwise on $\DD$.
Since $\sup_\ell\|\psi_\ell\|_{H^\infty}<\infty$,
the sequence $(\psi_\ell)$ forms a normal family on~$\DD$,
and since $\psi_\ell\to0$ pointwise on $\DD$, the
convergence is in fact locally uniform on~$\DD$.
Therefore $\psi_\ell^{(j)}(0)\to0$ as $\ell\to\infty$ for each $j$,
and so $\hat{\psi}_\ell(j)\to0$ as $\ell\to\infty$.
Writing $f_k(z):=z^k$, it follows that
\[
\|T_{\overline{\psi}_\ell}f_k\|_{H^2}^2
=\sum_{j=0}^k|\hat{\psi}_\ell(j)|^2
\to0 
\quad(\ell\to\infty).
\]
Re-using the hypothesis that $\sup_\ell\|\psi_\ell\|_{H^\infty}<\infty$, 
we have $\sup_\ell\|T_{\overline{\psi}_\ell}\|<\infty$.
As the set $\{f_k:k\ge0\}$ spans a dense subspace of $H^2$,
it follows that $\|T_{\overline{\psi}_\ell}h\|_{H^2}\to0$
for all $h\in H^2$. 
\end{proof}

\begin{proof}[Proof of Theorem~\ref{T:Tapprox}]
(i) Let $f\in\cH(b)$. By Theorem~\ref{T:TbTa},
there exists a unique $g\in H^2$ such that 
$T_{\overline{b}}f=T_{\overline{a}}g$, and then
$\|f\|_{\cH(b)}^2=\|f\|_{H^2}^2+\|g\|_{H^2}^2$.
Also, for each $\epsilon>0$, we have
\[
\sum_{m\ge0}\Bigl|\sum_{n\ge0}\overline{\hat{\phi}_\epsilon(n)}\hat{f}(m+n)\Bigr|^2
=\|T_{\overline{\phi}_\epsilon}f\|_{H^2}^2.
\]
Thus, to prove (i), it suffices to show that
$T_{\overline{\phi}_\epsilon}f\to g$ in $H^2$ as $\epsilon\to0^+$.

Let $(\epsilon_j)$ be any positive sequence such that $\epsilon_j\to0$. For each $j$, set $\psi_j:=a/a_{\epsilon_j}$.
Then $\|\psi_j\|_{H^\infty}\le 1$ for all $j$ and $\psi_j\to1$
pointwise on $\DD$. By Lemma~\ref{L:Tapprox},
applied to the sequence $(\psi_j-1)$, we have $\|T_{\overline{\psi}_j}h- h\|_{H^2}\to0$  for each $h\in H^2$. In particular 
$\|T_{\overline{\psi}_j}g- g\|_{H^2}\to0$. On the other hand, 
since $\psi_j=a/a_{\epsilon_j}$ and $T_{\overline{a}}g=T_{\overline{b}}f$, we have
\[
T_{\overline{\psi}_j}g
=T_{1/\overline{a}_{\epsilon_j}}T_{\overline{a}}g
=T_{1/\overline{a}_{\epsilon_j}}T_{\overline{b}}f
=T_{\overline{\phi}_{\epsilon_j}}f.
\]
Thus $\|T_{\overline{\phi}_{\epsilon_j}}f-g\|_{H^2}\to0$.
As $(\epsilon_j)$ is an arbitrary positive sequence tending to zero,
we conclude that $\|T_{\overline{\phi_\epsilon}}f- g\|_{H^2}\to0$
as $\epsilon\to0^+$, as desired.

(ii) Let $f\in H^2$, and  suppose that
\begin{equation}\label{E:liminf}
\liminf_{\epsilon\to0^+}\sum_{m\ge0}\Bigl|\sum_{n\ge0}\overline{\hat{\phi}_\epsilon(n)}\hat{f}(m+n)\Bigr|^2<\infty.
\end{equation}
We shall show that $f\in\cH(b)$.
The inequality \eqref{E:liminf} amounts to saying that we have
$\liminf_{\epsilon\to0^+}\|T_{\overline{\phi}_\epsilon}f\|_{H^2}<\infty$.
Therefore there exists a positive sequence $(\epsilon_j)$ tending to zero
such that $\sup_j\|T_{\overline{\phi}_{\epsilon_j}}f\|_{H^2}<\infty$.
As bounded subsets of $H^2$ are weakly relatively compact,
we may replace $(\epsilon_j)$ by a subsequence and suppose that
$T_{\overline{\phi}_{\epsilon_j}}f$ converges weakly
 to some $g\in H^2$. It follows that 
 \begin{equation}\label{E:weak}
 T_{\overline{a}}T_{\overline{\phi}_{\epsilon_j}}f\to T_{\overline{a}}g
 \quad\text{weakly}.
 \end{equation}
 Set $\psi_j:=a/a_{\epsilon_j}$.
 As in the proof of part~(i), we have 
 $\|T_{\overline{\psi}_j}h-h\|_{H^2}\to0$ for each $h\in H^2$. 
In particular, 
\begin{equation}\label{E:innorm}
T_{\overline{\psi}_j}T_{\overline{b}}f\to T_{\overline{b}}f 
\quad\text{in norm}.
\end{equation}
Also, since $b\psi_j=a\phi_{\epsilon_j}$ for all $j$, we  have
\begin{equation}\label{E:thirdly}
T_{\overline{\psi}_j}T_{\overline{b}}f=T_{\overline{a}}T_{\overline{\phi}_{\epsilon_j}}f.
\end{equation}
Combining \eqref{E:weak}, \eqref{E:innorm} and \eqref{E:thirdly}, 
we deduce that $T_{\overline{b}}f=T_{\overline{a}}g$.
Consequently, by Theorem~\ref{T:TbTa}, we have $f\in\cH(b)$.
This completes the proof.
\end{proof}

I am indebted to the referee for pointing out the following
natural interpretation of Theorem~\ref{T:Tapprox} in terms of unbounded Toeplitz operators, in the
spirit of Sarason's paper \cite{Sa08}.

\begin{corollary}\label{C:Tapprox}
Let $(b,a)$ be a Pythagorean pair, let $\phi=b/a$
and,  for $\epsilon>0$, define $\phi_\epsilon$
as in Theorem~\ref{T:Tapprox}.
Let $T_{\overline{\phi}}$ be the unbounded operator
on $H^2$ defined by specifying that
\begin{align*}
\cD(T_{\overline{\phi}})&:=\{f\in H^2: \liminf_{\epsilon\to0^+}\|T_{\overline{\phi}_\epsilon}f\|_{H^2}<\infty\},\\
T_{\overline{\phi}}f&:=\lim_{\epsilon\to0^+}T_{\overline{\phi}_\epsilon}f
\quad(f\in\cD(T_{\overline{\phi}})).
\end{align*}
Then $T_{\overline{\phi}}$ is well-defined, $\cD(T_{\overline{\phi}})=\cH(b)$, and
\begin{equation}\label{E:Tphibar}
\|f\|_{\cH(b)}^2=\|f\|_{H^2}^2+\|T_{\overline{\phi}}f\|_{H^2}^2
\quad(f\in \cH(b)).
\end{equation}
\end{corollary}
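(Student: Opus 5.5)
The corollary follows from Theorem~\ref{T:Tapprox}. There are three things to check: that the limit defining $T_{\overline{\phi}}$ exists on $\cD(T_{\overline{\phi}})$, that $\cD(T_{\overline{\phi}})=\cH(b)$, and that \eqref{E:Tphibar} holds.

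Throughout we use the identity noted at the start of the proof of Theorem~\ref{T:Tapprox}(i):
\[
\sum_{m\ge0}\Bigl|\sum_{n\ge0}\overline{\hat{\phi}_\epsilon(n)}\hat{f}(m+n)\Bigr|^2=\|T_{\overline{\phi}_\epsilon}f\|_{H^2}^2 .
\]
This holds because $\phi_\epsilon=b/a_\epsilon\in H^\infty$, since $|a_\epsilon|\ge\epsilon$.

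First, the domain. Let $f\in H^2$ with $\liminf_{\epsilon\to0^+}\|T_{\overline{\phi}_\epsilon}f\|_{H^2}<\infty$. Part~(ii) of Theorem~\ref{T:Tapprox}, in contrapositive form, gives $f\in\cH(b)$; this is exactly what the proof of part~(ii) established from the hypothesis \eqref{E:liminf}. Conversely, let $f\in\cH(b)$. Theorem~\ref{T:TbTa} gives a unique $g\in H^2$ with $T_{\overline{b}}f=T_{\overline{a}}g$. The proof of Theorem~\ref{T:Tapprox}(i) showed that $T_{\overline{\phi}_\epsilon}f\to g$ in $H^2$ as $\epsilon\to0^+$, along every sequence $\epsilon_j\to0$ and hence as a genuine limit. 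In particular the $\liminf$ is finite, so $f\in\cD(T_{\overline{\phi}})$. This proves $\cD(T_{\overline{\phi}})=\cH(b)$.

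Second, well-definedness. For each $f$ in the domain, which we now know is $\cH(b)$, the same norm convergence shows that $\lim_{\epsilon\to0^+}T_{\overline{\phi}_\epsilon}f$ exists in $H^2$ and equals $g$. So $T_{\overline{\phi}}f=g$, where $g$ is the unique function from Theorem~\ref{T:TbTa}. Linearity of $T_{\overline{\phi}}$ is then immediate.

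Finally, \eqref{E:Tphibar}. Substituting $g=T_{\overline{\phi}}f$ into \eqref{E:TbTa} gives $\|f\|_{\cH(b)}^2=\|f\|_{H^2}^2+\|T_{\overline{\phi}}f\|_{H^2}^2$.

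The only point needing care is that the full limit as $\epsilon\to0^+$ exists, not merely a subsequential one. This is handled by the arbitrary-sequence argument in the proof of part~(i), so no new estimates are required.
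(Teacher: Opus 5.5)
Your proposal is correct and takes the same route as the paper. You use part~(ii) of Theorem~\ref{T:Tapprox} to get $\cD(T_{\overline{\phi}})\subset\cH(b)$. You then use the norm convergence $T_{\overline{\phi}_\epsilon}f\to g$ from part~(i) to get well-definedness, the reverse inclusion, and \eqref{E:Tphibar} via \eqref{E:TbTa}.
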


\begin{proof}
If  $f\in\cD(T_{\overline{\phi}})$,
then $f\in\cH(b)$ by part~(ii) of Theorem~\ref{T:Tapprox},
and  by part~(i),
$\lim_{\epsilon\to0^+}\|T_{\overline{\phi}_{\epsilon}}f- g\|_{H^2}=0$,
where $g\in H^2$ satisfies $T_{\overline {b}}f=T_{\overline{a}} g$.
Thus $T_{\overline{\phi}}$
is well-defined. The remaining statements in the corollary
are immediate consequences of Theorem~\ref{T:Tapprox}.
\end{proof}

We can relate Corollary~\ref{C:Tapprox} to the
approach taken by Sarason in \cite{Sa08} as follows.
Once again, let $(b,a)$ be a Pythagorean pair
and let $\phi:=b/a$. Sarason defined an unbounded
analytic Toeplitz operator $T_\phi$ on $H^2$ by setting
\begin{align*}
\cD(T_\phi)&:=\{f\in H^2: \phi f\in H^2\},\\
T_{\phi}f&:=\phi f
\quad(f\in\cD(T_\phi)).
\end{align*}
He showed that $T_\phi$ is densely defined, and that its adjoint
$T_\phi^*$ satisfies $\cD(T_\phi^*)=\cH(b)$ and
\begin{equation}\label{E:Tphistar}
\|f\|_{\cH(b)}^2=\|f\|_{H^2}^2+\|T_{\phi}^*f\|_{H^2}^2
\quad(f\in \cH(b)).
\end{equation}
Comparing this with Corollary~\ref{C:Tapprox} leads us
to guess that, as unbounded operators on $H^2$,
\[
T_\phi^*=T_{\overline{\phi}}.
\]
This is indeed the case. We already know that
$\cD(T_\phi^*)=\cH(b)=\cD(T_{\overline{\phi}})$.
Also, comparing \eqref{E:Tphibar} and \eqref{E:Tphistar}
and using the polarization identity, we obtain
\[
\langle T_{\overline{\phi}}f,\,T_{\overline{\phi}}g\rangle_{H^2}=
\langle T_\phi^*f,\,T_\phi^*g\rangle_{H^2}
\quad(f,g\in\cH(b)).
\]
Applying this identity with $g=k_w$ for $w\in \DD$, 
and using the easily established identities 
$T_{\overline{\phi}}k_w=\overline{\phi(w)}k_w=T_\phi^*k_w$,
we deduce that 
\[
\langle T_{\overline{\phi}}f,\,\overline{\phi(w)}k_w\rangle_{H^2}
=\langle T_\phi^* f,\,\overline{\phi(w)}k_w\rangle_{H^2}
\quad(w\in\DD).
\]
It follows that $T_{\overline{\phi}}f=T_\phi^*f$
on $\DD\setminus\phi^{-1}(0)$, and hence on all of $\DD$,
as claimed.

%%%%%%%%%%%%%%%%%%%%%%%%%%%%%%%

\section{An open problem}

We conclude this article with a brief discussion of 
the following problem, which remains unsolved.

\begin{problem}\label{Pb:open}
Let $(b,a)$ be a Pythagorean pair and let $\phi=b/a$.
Is it true that, if $f\in H^2$ and
\begin{equation}\label{E:opencond}
\sum_{m\ge0}\Bigl|\sum_{n\ge0}\overline{\hat{\phi}(n)}\hat{f}(m+n)\Bigr|^2<\infty,
\end{equation}
then $f\in\cH(b)$ and
\begin{equation}\label{E:openformula}
\|f\|_{\cH(b)}^2=\sum_{m\ge0}|\hat{f}(m)|^2+\sum_{m\ge0}\Bigl|\sum_{n\ge0}\overline{\hat{\phi}(n)}\hat{f}(m+n)\Bigr|^2?
\end{equation}
\end{problem}

We have seen that the answer is affirmative if $\phi\in H^2$ 
(Theorem~\ref{T:H2}) or if $\phi$ is rational (Theorem~\ref{T:rational}), but for general $\phi\in N^+$,
the question remains open. We suspect that the answer 
is negative, but at present there is no counterexample.
(The counterexamples in this article, and 
those of which we are aware elsewhere in 
the literature, treat only the converse question, 
namely whether   \eqref{E:opencond}
holds for all $f\in\cH(b)$.)

The following proposition suggests a possible route towards
a counter\-example of the type that we seek.

\begin{proposition}\label{P:open}
Let $(b,a)$ be a Pythagorean pair and let $\phi=b/a$.
Suppose, in addition, that $b$
is an outer function (or, equivalently, that $\phi$ is outer). If there exists $f\in H^2\setminus\{0\}$
such that
\begin{equation}\label{E:openzerocond}
\sum_{n\ge0}\overline{\hat{\phi}(n)}\hat{f}(m+n)=0 \quad(m\ge0),
\end{equation}
then the answer to Problem~\ref{Pb:open} is negative
for this pair $(b,a)$.
\end{proposition}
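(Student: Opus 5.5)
The plan is a short dichotomy argument. Fix $f\in H^2\setminus\{0\}$ satisfying \eqref{E:openzerocond}. Then each inner series converges (to $0$), and the left-hand side of \eqref{E:opencond} equals $0$, so \eqref{E:opencond} holds trivially for this $f$. To show that the answer to Problem~\ref{Pb:open} is negative, it therefore suffices to show that \eqref{E:openformula} cannot hold with $f\in\cH(b)$. There are two cases:
\begin{itemize}
\item if $f\notin\cH(b)$, the answer is already negative;
\item if $f\in\cH(b)$ and \eqref{E:openformula} held, then, since the second sum on its right-hand side vanishes, it would reduce to $\|f\|_{\cH(b)}^2=\|f\|_{H^2}^2$.
\end{itemize}
So I aim to derive a contradiction from $f\in\cH(b)$ together with $\|f\|_{\cH(b)}=\|f\|_{H^2}$.

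First, by Theorem~\ref{T:TbTa} there is a unique $g\in H^2$ with $T_{\overline b}f=T_{\overline a}g$, and $\|f\|_{\cH(b)}^2=\|f\|_{H^2}^2+\|g\|_{H^2}^2$. The norm equality then forces $g=0$, hence $T_{\overline b}f=0$. Next I translate this into orthogonality. For every $h\in H^2$ we have
\[
0=\langle T_{\overline b}f,h\rangle_{H^2}=\langle \overline b f,h\rangle_{L^2}=\langle f,bh\rangle_{H^2},
\]
so $f\perp bH^2$. Since $b$ is a bounded outer function, Beurling's theorem gives that $bH^2$ is dense in $H^2$. Hence $f=0$, contradicting $f\ne0$.

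It remains to justify the parenthetical equivalence that $b$ is outer iff $\phi$ is outer. Since $a$ is outer and $\phi=b/a$, the functions $b$ and $\phi$ have the same inner factor in their canonical $N^+$ factorizations, so one is outer exactly when the other is.

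I do not expect a serious obstacle. The only points needing care are that Theorem~\ref{T:TbTa} gives $g=0$ from the norm identity, and that outerness of $b$ is used precisely through Beurling's theorem to conclude that $\ker T_{\overline b}=\{0\}$.
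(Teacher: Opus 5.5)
Your proposal is correct and follows essentially the same argument as the paper: you use Theorem~\ref{T:TbTa} with the norm identity to force $g=0$, deduce $T_{\overline{b}}f=0$ and hence $f\perp bH^2$, and then use density of $bH^2$ for outer $b$ to get $f=0$. Your case split and your justification that $b$ and $\phi$ share the same inner factor are harmless additions to the paper's proof.
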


\begin{proof}
Suppose, if possible, that the answer to 
Problem~\ref{Pb:open} is affirmative
for  $(b,a)$.
Then the condition \eqref{E:openzerocond} implies that 
$f\in\cH(b)$
and $\|f\|_{\cH(b)}^2=\|f\|_{H^2}^2$.
By Theorem~\ref{T:TbTa}, since $f\in\cH(b)$,
there exists $g\in H^2$ such  that 
$T_{\overline{b}}f=T_{\overline{a}}g$ and 
$\|f\|_{\cH(b)}^2=\|f\|_{H^2}^2+\|g\|_{H^2}^2$.
Comparing the two expressions for $\|f\|_{\cH(b)}^2$,
we see that $\|g\|_{H^2}^2=0$, so $g=0$, and hence 
$T_{\overline{b}}f=T_{\overline{a}}0=0$.
This in turn implies that $f\in (bH^2)^\perp$.
By assumption, $b$ is outer, so $bH^2$ is dense in $H^2$
and $(bH^2)^\perp=\{0\}$. Hence, finally, $f=0$,
contrary to assumption. This contradiction establishes
the proposition.
\end{proof}

%%%%%%%%%%%%%%%%%%%%%%%%%%%%%%%

\bibliographystyle{plain}
\bibliography{bibfile.bib}

\end{document}